\def\Z{\mathbb Z}
\def\Q{\mathbb Q}
\def\Qbar{\overline{\mathbb{Q}}}
\def\R{\mathbb R}
\def\C{\mathbb C}
\def\Oo{\mathcal O}
\def\Hh{\mathcal H}
\DeclareMathOperator \Log {Log}
\DeclareMathOperator \Reg {Reg}
\DeclareMathOperator \Vol {Vol}
\DeclarePairedDelimiter\abs{\lvert}{\rvert}
\DeclarePairedDelimiter\norm{\lVert}{\rVert}
\newtheorem{lem}{Lemma}
\newtheorem{thm}[lem]{Theorem}
\newtheorem{prop}[lem]{Proposition}
\newtheorem{de}[lem]{Definition}
\newtheorem{rmk}[lem]{Remark}
\begin{document}

\title{Pisot unit generators in number fields}
\author{T.~V\'avra, F.~Veneziano}

\maketitle

\begin{abstract}
%Pisot numbers are real algebraic integers bigger than 1, whose other conjugates have all modulus smaller than 1.
%They satisfy many interesting arithmetical properties, and they arise naturally in many different fields.
%In the theory of numerations systems Pisot numbers who are also algebraic units are of special interest. In this paper we deal with the algorithmic problem of finding the smallest Pisot unit generating a given number field.
%We first solve this problem in all real fields, then we consider the analogous problem involving the so called complex Pisot numbers (a straightforward generalisation) and we solve it in all number fields that admit such a generator, in particular all complex number fields without complex multiplication.
%The idea of our algorithms is to use the $\Log$ map, which arises in algebraic number theory, to translate our problems into integer lattice problems, and then to use integer programming algorithms.
%Special care needs to be taken to exclude Salem numbers from the output of our algorithms; this is done using number-theoretical results in the context of the Lehmer Conjecture. 
%
%\vspace{1cm}
Pisot numbers are real algebraic integers bigger than 1, whose other conjugates have all modulus smaller than 1. In this paper we deal with the algorithmic problem of finding the smallest Pisot unit generating a given number field.
We first solve this problem in all real fields, then we consider the analogous problem involving the so called complex Pisot numbers and we solve it in all number fields that admit such a generator, in particular all fields without CM, but not only those.
%The idea of our algorithms is to use the $\Log$ map, which arises in algebraic number theory, to translate our problems into integer lattice problems, and then to use integer programming algorithms.
%Special care needs to be taken to exclude Salem numbers from the output of our algorithms; this is done using number-theoretical results in the context of the Lehmer Conjecture. 
\end{abstract}

\section{Introduction}
  Pisot numbers are a remarkable class of algebraic numbers having the following definition.
      \begin{de}
         A Pisot number is a real algebraic integer greater than 1 whose other conjugates have modulus strictly smaller than 1.
      \end{de}
      Pisot numbers were introduced by Pisot in his thesis in 1938, although they had been considered earlier by Thue and Hardy. Pisot was mainly concerned with the link to harmonic analysis, but by virtue of their arithmetic properties they arise naturally in many other fields like ergodic theory, dynamical systems, algebraic groups, and non-standard numeration.
      
      %It is well known that Pisot numbers are bounded from below by the root $\gamma\approx 1.3247$ of $x^3-x-1.$
%      
%      ----------------------------------------------
%      
%      There are equivalent characterizations using the notion of distribution $\bmod 1.$ 
%
%It is not difficult to show that for every Pisot number $\beta$ and any algebraic integer $\lambda\in\Q(\beta)$ it holds that $\lim_{n\rightarrow+\infty}\|\lambda\beta^n\|=0$, where $\|\cdot\|$ denotes the distance from the nearest rational integer. The converse is also true for algebraic numbers. In particular, if $\beta$ is algebraic and there exists a non-zero $\lambda\in\R$ such that $\lim_{n\rightarrow+\infty}\|\lambda\beta^n\|=0$, then $\beta$ is a Pisot number. One can also omit the assumption of $\beta$ being algebraic by assuming that the series $\sum_{n\geq1}\|\lambda\beta^n\|^2$ converges.

       The first aim of this paper is to present an algorithm that, given a real number field, finds a generator over $\Q$ which is a unit and a Pisot number. As a motivation for this purpose we recall some results from the theory of number systems.

Consider $\beta$-expansions, a number system where elements $x\in[0,1)$ are expressed in the form $x=\sum_{i\geq 1}a_i\beta^{-i}$ with $a_i\in\{0,1,\dots, \lceil\beta\rceil-1\}$
defined as
$$
a_i=\lfloor\beta T_\beta^{i-1}(x)\rfloor,\quad\text{ where }\quad T_\beta: [0,1)\mapsto[0,1),\ T_\beta(x)=\beta x-\lfloor\beta x\rfloor.
$$

      A well known theorem of K. Schmidt \cite{Schmidt} states that if $\beta$ is a Pisot number, then all the elements of $\Q(\beta)\cap[0,1)$ have eventually periodic $\beta$-expansion. In other words, the orbit of $x\in[0,1)$ under $$T_\beta:x\mapsto \beta x\mod 1$$ is eventually periodic if and only if $x\in\Q(\beta).$

Even more can be said under the additional assumption that $\beta$ is a unit of $\Q(\beta).$
%ADD PROPERTY F?
% Let us recall the following definition: 
% \begin{property}[F]
%    $\beta>1$ satisfies Property (F) if for any $x\in\Z[\beta^{-1}]\cap[0,1)$ there exists $k\in\N$ such that $T_\beta^k(x)=0.$
% \end{property}
S. Akiyama showed in \cite{Aki98} that if $\beta$ is a Pisot unit which satisfies an additional technical condition, then there exists a real number $c(\beta)>0$ such that every element of $\Q\cap[0,c(\beta))$ has purely periodic expansion.
     
It was already proved by Salem in \cite{salem} that in every number field $K$ there is a Pisot number $\beta$ such that $K=\Q(\beta)$. His proof is based on the existence of a lattice point in a certain volume in $\R^n$; finding such a point, however, is algorithmically a hard problem. Q. Cheng and J. Zhuang in \cite{ChengZhuang} presented a polynomial time algorithm to find a Pisot generator of a real Galois field when its integral basis is given; however they assume that the field is a Galois, while a generator exists for every real number field.

\medskip
      
      It is also known that every real number field can be generated by a Pisot unit, see Teorem 5.2.2 of \cite{PisotSalemNumbers} for a non-constructive proof.

      Our Algorithm \ref{Algo2} solves this problem computationally.
      The idea is to use the map $\Log:\Oo_K\setminus\{0\}\to\R^{r_1+r_2}$ as in the proof of the famous theorem by Dirichlet on the structure of $\Oo_K^*$ to reduce the problem to a lattice problem in $\R^{r_1+r_2}$. This is done by observing that all Pisot units lie inside an open convex subset $Q$ of $\R^{r_1+r_2}$ bounded by planes, and belong to a certain lattice $\Lambda$.
      
      We are interested in smallest Pisot generators of $K$, and for this purpose we utilize Integer Programming algorithms. Note that linear programming works on a closed convex set, while we look for solutions in an open set, and it may happen that potential solutions exist on the boundary of $Q$ (they correspond to Salem numbers, another important class of algebraic integers with remarkable arithmetic properties). To deal with this problem, we use an explicit result on the Lehmer Conjecture to add an additional linear constraint that excludes all lattice points on the boundary and none of those lying in the interior.
   
\medskip

      Clearly only real number fields can be generated by Pisot numbers, but it is evident from the definition of the $\Log$ map that real and complex embeddings of the field $K$ play a very similar role in the proof, and that an analogous theorem can be obtained also for complex fields if one changes the definitions accordingly.
      
      A natural extension of the definition of Pisot numbers to non-real algebraic integers is the following
      \begin{de}
	 An algebraic integer  $\theta\in\Qbar\setminus\R$ is called a complex Pisot number if $\abs{\theta}>1$ and all its conjugates except $\overline{\theta}$ have modulus strictly smaller than 1.
      \end{de}
      This definition has been considered by several mathematicians in the field of numeration systems as a suitable analogue of Pisot numbers.

      Accordingly, a second aim of this paper is to present an algorithm for determining a complex Pisot unit generator of a given complex field. It turns out that not all complex number fields can be generated by complex Pisot units (examples in which this is impossible are the fields $\Q(i)$ and $\Q(\sqrt{2},\sqrt{-5})$), but we are able to give a characterisation of the fields for which this is possible and to exhibit Algorithm \ref{Algo3}, which finds such a generator whenever it exists and halts with an error message if it doesn't exist.

      Algorithm \ref{Algo3} is based on the same idea of algorithms \ref{Algo1} and \ref{Algo2}. The main difference is that in this case not all points in $\Lambda\cap Q$ correspond to the units that we seek, and some adjustments are needed to exclude from the search the points lying in a certain sublattice.

We state here the problems that we have considered and solved in this paper. Following \cite{PisotSalemNumbers} we also define the set of the $U$-numbers are the union of the set of Pisot and Salem numbers (the precise definition is found in Section~\ref{preliminaries}).

\begin{enumerate}
 \item[P1] Given a real number field $K$, find the smallest $U$-number $\alpha\in\Oo_K^*$ such that $K=\Q(\alpha)$.
 \item[P2] Given a real number field $K$, find the smallest Pisot number $\alpha\in\Oo_K^*$ such that $K=\Q(\alpha)$.
 \item[P3] \label{prob:cp} Given a complex number field $K$, decide whether a complex Pisot number $\alpha\in\Oo_K^*$ exists, such that $K=\Q(\alpha)$. If any exist, find the smallest in absolute value.
\end{enumerate}
Our three algorithms {\tt FINDMIN}, {\tt CUTEDGE}, {\tt FINDCPISOT} solve these three problems. More precisely, we prove in Section~\ref{mainsection} the following theorems:
\begin{thm}\label{thm:algo1}
   Algorithm {\tt FINDMIN} always terminates with output $\alpha$. When $K$ is a real field, and $\phi_k$ is the identity embedding, then $\abs{\alpha}$ is a $U$-number, an algebraic unit, it generates the field $K$ over $\Q$, and it has the smallest height among elements with this properties.
\end{thm}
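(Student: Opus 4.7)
The plan is to reduce the search for a smallest U-number unit generator of $K$ to an integer programming problem on the logarithmic unit lattice. The key ingredient is to identify U-number units with lattice points in a rational polyhedral region cut out by hyperplanes coming from the embeddings of $K$.

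First I would recall the logarithmic embedding $\Log:\Oo_K^*\to\R^{r_1+r_2}$ and Dirichlet's unit theorem: the image $\Lambda$ is a lattice of rank $r_1+r_2-1$ in the hyperplane $H=\{x\in\R^{r_1+r_2}:\sum_i x_i=0\}$. Assume $K$ is real and $\phi_k$ is the identity embedding. Then for a unit $\alpha\in\Oo_K^*$, the condition that $|\alpha|$ is a U-number (that is, Pisot or Salem) is equivalent, in the coordinates of $\Log(\alpha)$, to $x_k>0$ together with $x_i\le 0$ for every $i\ne k$. Conversely, any lattice point of $\Lambda$ satisfying these inequalities arises from a unit whose absolute value is a U-number; interior points correspond to Pisot numbers, boundary points to Salem numbers, and together they exhaust the U-numbers that are units of $K$.

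Next, since every U-number $|\alpha|$ has all conjugates other than $|\alpha|$ itself of absolute value at most $1$, the Weil height is $h(|\alpha|)=\tfrac{1}{d}\log|\alpha|=\tfrac{x_k}{d}$. Minimising the height therefore reduces to minimising the linear functional $x\mapsto x_k$ over lattice points of $\Lambda$ lying in the closed region defined above. This is a classical integer programming problem on a rational polyhedron together with an explicit lattice basis coming from any computed system of fundamental units, and is effectively solvable. The termination of such an IP procedure establishes termination of {\tt FINDMIN}.

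The remaining non-trivial point is to enforce $\Q(\alpha)=K$. For every proper subfield $K'\subsetneq K$, the image $\Log(\Oo_{K'}^*)$ is a proper sublattice $\Lambda_{K'}\subsetneq\Lambda$ sitting inside a proper linear subspace of $H$. Since $K$ has only finitely many subfields, one can either iterate over candidate optima in order of increasing $x_k$, discarding any that falls into some $\Lambda_{K'}$, or perform a single optimisation on the complement $\Lambda\setminus\bigcup_{K'\subsetneq K}\Lambda_{K'}$ by inclusion--exclusion on the finite family of sublattices, which keeps everything in the IP framework. I expect the main obstacle to be precisely this subfield exclusion, together with the careful treatment of the boundary $\{x_i=0\}$ of $Q$: lattice points on the boundary can correspond to genuine Salem units (which must be kept), but also to degenerate situations where $|\alpha|=1$ or where $\alpha$ fails to generate a U-number; these possibilities must be separated by a direct verification after the IP optimum is obtained. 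Once these checks are in place, the four assertions of the theorem follow: $\alpha$ is a unit by membership in $\Lambda$, $|\alpha|$ is a U-number by the defining inequalities, $\Q(\alpha)=K$ by the subfield avoidance, and the height of $|\alpha|$ is minimal by optimality of the integer programme.
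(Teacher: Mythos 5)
Your overall reduction to Integer Programming on the log-unit lattice is the same as the paper's, but you have misidentified where the actual difficulty lies, and this leads to both an unnecessary complication and a genuine gap.

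The subfield-exclusion step you propose is not needed. This is precisely the content of parts~\ref{GenReal} and~\ref{GenRealSalem} of Proposition~\ref{prop:main}: if $\phi_k$ is a real embedding and $\Log(\beta)\in\overline{Q}_k\setminus\{0\}$, then \emph{exactly one} conjugate of $\beta$ has modulus $>1$, so the number of embeddings of $K$ fixing $\phi_k(\beta)$ is $1$, forcing $[K:\Q(\beta)]=1$. A unit lying in a proper subfield $K'\subsetneq K$ would be carried outside the unit disk by $[K:K']>1$ embeddings of $K$, so its $\Log$-image would have more than one positive coordinate and could never land in $\overline{Q}_k$. The inclusion--exclusion over sublattices $\Log(\Oo_{K'}^*)$ that you sketch is therefore solving a problem that does not arise; worse, it would take you outside the clean IP framework that makes the algorithm effective.

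Where you do need an idea — and pass over it too lightly — is the exclusion of the trivial solution $\Log(\alpha)=0$ (i.e.\ $\alpha$ a root of unity, giving $|\alpha|=1$). The natural constraint is the strict inequality $x_k>0$, but IP solvers work on closed polyhedra, and simply ``verifying afterwards'' does not tell the solver which point to return. The paper's trick is to invoke the explicit Dobrowolski--Voutier bound (Theorem~\ref{TeoDobro}): every non-torsion unit $\beta$ with $\Log(\beta)\in\overline{Q}_k$ satisfies $h(\beta)=x_k\geq\delta(n)$, so one may replace $x_k>0$ with the closed constraint $x_k\geq\delta(n)$ without losing any relevant lattice point. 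This is exactly the constraint $b_k=-\delta(n)$ appearing in {\tt FINDMIN}, and it is the one genuinely non-trivial ingredient of this proof. Without some such quantitative lower bound your argument does not produce a well-posed IP instance.
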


 \begin{thm}\label{thm:algo2}
  The Algorithm {\tt CUTEDGE} always terminates with output $\alpha$. When $K$ is a real field, and $\phi_k$ is the identity embedding, then $\abs{\alpha}$ is a Pisot number, an algebraic unit, it generates the field $K$ over $\Q$, and it has the smallest height among elements with this properties.
 \end{thm}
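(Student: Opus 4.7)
My plan is to parallel the proof of Theorem \ref{thm:algo1} (which I take for granted) for Algorithm FINDMIN, the novelty being the justification of the additional ``cutting'' inequality that gives CUTEDGE its name.

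I would start by recalling the geometric framework common to both algorithms. Via the map $\Log\colon \Oo_K^*\to\R^{r_1+r_2}$, the unit group embeds as a lattice $\Lambda$ contained in the norm-zero hyperplane. After fixing the distinguished real embedding $\phi_k$ as the identity, the Pisot condition carves out the open convex cone
\[
Q=\{x\in\R^{r_1+r_2}:x_k>0,\; x_j<0\ \forall j\neq k\},
\]
whose closure $\overline{Q}$ corresponds to the $U$-units, interior to the Pisot units, and relative boundary (inside $\{x_k>0\}$) to the Salem units. The logarithmic height of $\alpha\in\Oo_K^*$ is a linear functional on $\overline{Q}$ proportional to $x_k$, exactly as used by FINDMIN.

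The main step, and the main obstacle, is to prove the separation property of the extra linear constraint. One must exhibit a half-space $H=\{L(x)\geq c\}$ with $L$ linear and $c$ effectively computable from $[K:\Q]$, such that every lattice point $\Log\alpha\in\Lambda\cap(\overline{Q}\setminus Q)$ (a Salem unit) is excluded by $H$, while every lattice point in $\Lambda\cap Q$ (a Pisot unit) is kept. The core observation is that an effective Lehmer-type estimate (Dobrowolski or Voutier style) for the Mahler measure of a non-cyclotomic algebraic integer of degree at most $[K:\Q]$, combined with the fact that a Salem unit has some conjugate of modulus exactly $1$, produces a quantitative obstruction to being on the boundary; translating this obstruction into the $\Log$-coordinates yields the sought linear inequality. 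I would verify both inclusions by directly comparing the Mahler measure bound with the coordinates of $\Log\alpha$, making sure that the gap provided by Lehmer is strictly smaller than the smallest positive value $-x_j$ attained on the Pisot face, so that no interior Pisot lattice point is sacrificed.

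With this separation in place, the remainder is routine. The minimization of $x_k$ over the closed polyhedron $\Lambda\cap\overline{Q}\cap H$ (bounded above in height by any exhibited Pisot generator, for instance one produced via Salem's construction) is a standard integer linear program solvable by Lenstra's algorithm, so CUTEDGE terminates. By the separation property, the resulting lattice point lies in $Q$, hence its preimage $\alpha$ is a Pisot unit of $\Oo_K$ of minimum height. That $\alpha$ actually generates $K$ rather than a proper subfield is handled precisely as in FINDMIN: the $\Log$-images of the unit groups of the finitely many proper subfields $K'\subsetneq K$ form proper sublattices of $\Lambda$, so one runs an IP for each and selects the minimum attained in $\Lambda$ but in none of them. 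This produces the smallest Pisot unit generator of $K$, as claimed.
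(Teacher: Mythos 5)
The overall geometry — $\Log$ maps the units to a lattice, the Pisot units land in the open cone $Q_k$, the Salem units on the half-lines $l_{k,j}$ of its boundary, and one minimizes the linear functional $x_k$ — is set up correctly, but the heart of the proof is the justification of the ``cutting'' constraint, and there the proposal misses the key idea and has a genuine gap.

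You propose to show that Dobrowolski--Voutier applied to $\beta$ itself yields a ``quantitative obstruction to being on the boundary,'' so that a fixed half-space $H$ (``effectively computable from $[K:\Q]$'') separates all Salem lattice points from all Pisot lattice points. This is not how Theorem~\ref{thm:cutedge} works, and the direct version does not go through: Dobrowolski bounds the single coordinate $b_k=\log M(\beta)$ from below, but it says \emph{nothing} about the remaining coordinates $b_i$, $i\neq k$, and there is no a priori lower bound on $-\sum_{i\neq k,j}b_i$ for a Pisot lattice point near the face $l_{k,j}$. What the paper actually does is a quotient trick: it first runs {\tt FINDMIN}, obtains a Salem unit $\alpha$ with $\Log(\alpha)\in l_{k,j}$ (this determines the index $j$; by Proposition~\ref{prop:salem} all degree-$n$ Salem numbers in $K$ are powers of one of them, so they all lie on the \emph{same} half-line), and then adds the constraint $\sum_{i\neq k,j}x_i\leq -\delta(n)$. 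To see that this sacrifices no Pisot point, one takes any unit $\beta$ in the excluded slab, divides it by a power of $\alpha$ to normalize $b_j$, and computes $h(\beta/\alpha)=-\sum_{i\neq j,k}b_i<\delta(n)$, using the \emph{minimality of $\alpha$} ($h(\beta)\geq h(\alpha)$) to make a term drop out. Dobrowolski applied to $\beta/\alpha$ then forces $\beta/\alpha$ to be a root of unity, so $\beta$ is itself a power of $\alpha$, i.e.\ Salem, not Pisot. Without this two-step argument (the dependence of $j$ on the {\tt FINDMIN} output, and the application of the Lehmer bound to the \emph{quotient} rather than to $\beta$), the separation you describe does not hold.

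A second, smaller, inaccuracy: you propose to ensure $\alpha$ generates $K$ by running additional integer programs to avoid the sublattices $\Log(\Oo_{K'}^*)$ of proper subfields $K'$. This is not needed and is not what {\tt CUTEDGE} does. By Proposition~\ref{prop:main}, \emph{any} $\beta$ with $\Log(\beta)\in Q_k$ for a real $\phi_k$ automatically satisfies $K=\Q(\beta)$, because $\phi_k(\beta)$ is the unique conjugate lying outside the unit disk, which forces $[K:\Q(\beta)]=1$. So generation comes for free once the lattice point lies in the open cone.

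Termination and the minimality of the result are handled as you say (one appeals to the nonemptiness of the feasible region and to {\tt INTPROG}; the paper does this via density of $\Q\otimes\Lambda$ in the hyperplane rather than via a Salem-type bound, but both are workable). The essential gap is in the separation step.
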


 \begin{thm}\label{thm:algo3}
    The Algorithm {\tt FINDCPISOT} always terminates. It returns an error message when $\Oo_K^*\subseteq\R$; in all other cases is outputs an element $\beta$ such that: $\beta$ is a complex Pisot number, an algebraic unit, it generates the field $K$ over $\Q$, and it has the smallest height among elements with this properties.
 \end{thm}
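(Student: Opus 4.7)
The strategy parallels Theorems~\ref{thm:algo1} and~\ref{thm:algo2}: encode the existence of a small complex Pisot generator as an integer program on the logarithmic image of the unit lattice. Fix the distinguished complex embedding $\phi_k$; if $r_2=0$ the algorithm errors out at once. Otherwise the Dirichlet map $\Log\colon\Oo_K^*\to\R^{r_1+r_2}$ sends units to a lattice $\Lambda$ of rank $r_1+r_2-1$ in the hyperplane $\sum x_i=0$, and the conditions $\abs{\phi_k(\beta)}>1$ together with $\abs{\phi(\beta)}<1$ for every $\phi\notin\{\phi_k,\overline{\phi_k}\}$ translate into membership in an open polyhedral cone $Q\subset\R^{r_1+r_2}$ with positive $k$-th coordinate and negative coordinates elsewhere.

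For the error case, let $K_0\subseteq K$ be the largest subfield with $\phi_k(K_0)\subset\R$, and set $\Lambda_0:=\Log(\Oo_{K_0}^*)\subseteq\Lambda$; every complex Pisot unit must lie outside $K_0$. The hypothesis $\Oo_K^*\subseteq\R$ translates, modulo torsion, to $\Lambda_0=\Lambda$: every lattice point comes from a real unit and no complex Pisot exists. The algorithm detects this by a rank comparison and triggers the error message. When $\Lambda_0\subsetneq\Lambda$ instead, a scaling argument applied to a unit whose class in $\Lambda/\Lambda_0$ is nontrivial produces lattice points arbitrarily deep in $Q$ that come from genuinely non-real units, so $\Lambda\cap Q$ is non-empty and populated by candidate complex Pisot elements.

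The true novelty compared with the real case is that a point $\Log\beta\in\Lambda\cap Q$ may come from a unit $\beta$ lying in a proper intermediate field, in which case $\beta$ fails to generate $K$. These degenerate solutions form a finite union of proper sublattices of $\Lambda$, one per maximal proper subfield of $K$ (computable from its Galois closure). Exactly as in Theorem~\ref{thm:algo2}, optimisation is carried out not on the open cone $Q$ but on a closed shrunken version obtained by imposing a slack $\varepsilon>0$ coming from an explicit Lehmer-type lower bound on Mahler measures; this excludes boundary (Salem) points while retaining every interior lattice point and makes the resulting integer linear program well-posed and bounded. The algorithm then alternates between solving this program for the minimum-height candidate and, if the candidate lies in a forbidden sublattice, restricting the feasible region by the appropriate congruence constraints and re-solving. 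The main obstacle I expect is verifying termination and global minimality of this loop, which reduces to the finiteness of the subfield lattice of $K$ together with the fact, just noted, that the union of the forbidden sublattices is a proper subset of $\Lambda$ precisely when $\Oo_K^*\not\subseteq\R$; once this is granted, correctness and minimality of $\beta$ follow as in Theorem~\ref{thm:algo2}.
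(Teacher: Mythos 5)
Your proposal diverges from the paper's argument in several ways, and some of the divergences are actual gaps.

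First, the worry about ``degenerate solutions form a finite union of proper sublattices of $\Lambda$, one per maximal proper subfield of $K$'' overshoots what Proposition~\ref{prop:main} shows. Part~\ref{GenComplnCM} of that proposition says that if $\Log(\beta)\in Q_k$ for a complex index $k$ \emph{and} $\phi_k(\beta)\notin\R$, then $K=\Q(\beta)$ automatically; the only degenerate case (part~\ref{GenComplCM}) is when $\phi_k(\beta)\in\R$, i.e.\ $\beta$ lies in $K\cap\R$. So there is exactly one ``forbidden'' locus to worry about, not one per subfield, and your framework of congruence constraints per maximal subfield solves a harder problem than is posed.

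Second, and more seriously, your approach of excluding the sublattice $\Lambda_0 = \Log(\Oo_{K_0}^*)$ from the search is wrong in the case $\mu_K\neq\{\pm1\}$. The $\Log$ map has kernel $\mu_K$, so a lattice point $\Log(\beta)\in\Lambda_0$ with $\beta\in\R$ may simultaneously be the image of the non-real unit $\zeta\beta$ for a non-real root of unity $\zeta$. In that situation $\zeta\beta$ is a perfectly valid complex Pisot generator, and excluding $\Lambda_0$ would discard it. The paper handles this by a case split: when $\mu_K\neq\{\pm1\}$, run {\tt CUTEDGE} and multiply the output by $\zeta$ if it happens to be real -- then \emph{every} lattice point in $Q_k$ is usable and no sublattice exclusion is needed. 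Your plan never exploits (or even mentions) this root-of-unity trick, and as a consequence the claimed minimality would fail for those fields.

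Third, the termination argument is handed off to ``finiteness of the subfield lattice'' and an analogy to Theorem~\ref{thm:algo2}, but that is not where the difficulty lies. In the residual case $\mu_K=\{\pm1\}$ with some non-real fundamental unit $u_j$, the algorithm enumerates lattice points in expanding slabs of $Q_k$ and filters out the real ones; one must show this loop actually escapes. The paper does so by exhibiting, for any real $\beta$ with $\Log(\beta)\in Q_k$, the non-real unit $u_j\beta^m$ and checking that $\Log(u_j\beta^m)\in Q_k$ for $m$ large, which forces the loop to terminate after finitely many widenings. Your outline contains no such witness and therefore does not justify termination. Finally, note the algorithm detects the error case by testing whether $\mu_K=\{\pm1\}$ and all $u_i$ are real, not by a rank comparison; your criterion is equivalent but your plan does not verify that equivalence.
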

 
 We have also written an implementation of our Algorithms 1 and 2 in SageMath. The source code is available at \url{https://cloud.sagemath.com/projects/963c4563-d55c-4db1-9d1f-e4401cac00d0/files/pisotgen.sagews}.

%  
%       Our main goal is to find a Pisot unit $\beta$ such that $K=K(\beta)$, and this can only happen if $K\subset\R$, therefore we now assume that the field $K$ is a real field. We also assume that the first of the real embeddings $\sigma_{1},\dotsc, \sigma_{r_1}$ is the identity. Let $\{u_1,\dotsc,u_r\}$ be a fundamental system, and let us also assume that each of the $u_i$ is positive. (We have $\mu_K=\{\pm 1\}$, so this amounts to changing the sign of a $u_i$ when required.) The following theorem is a corollary of previous sections.
%       Following \cite{PisotSalemNumbers} we also define the set of the $U$-numbers are the union of the set of Pisot and Salem numbers.
%       \begin{thm}
% 	 Let $\alpha$ and $\beta$ be the outputs of Algorithms \ref{Algo1} and \ref{Algo2}  respectively, when executed with the input $(u_1,\dotsc,u_n)$ and $k=1$. 
% 	 Then:
% 	 \begin{enumerate}
% 	    \item $\Q(\alpha)=\Q(\beta)=K$;
% 	    \item $\alpha$ and $\beta$ are algebraic units;
% 	    \item $\alpha$ is a $U$-number, $\beta$ is a Pisot number;
% 	    \item $\alpha$ and $\beta$ have the smallest height among all the elements of $K$ satisfying 1--3. 
% 	 \end{enumerate}
%       \end{thm}
%       The proof follows immediately by comparing Algorithm 2 with Corollary \ref{prop:main}.
% 
% In particular, the two algorithms will give the same output when $\alpha$ is a Pisot number, and will be different when $\alpha$ is a Salem number.

\subsubsection*{Content of the paper}
The paper is organized as follows.
In Section \ref{subsec:nf} we introduce the notation that will be used in the whole paper; we describe the structure of the units of a number field and the map $\Log$ that is used to translate our problems into Integer Programming problems; we also define CM fields, as they appear in the characterisation of the fields for which problem \ref{prob:cp} cannot be solved. 

In Section \ref{subsec:salem} we give the definition of Salem numbers and some of their properties, which will be used in the proof of Theorem \ref{thm:algo2}.

In Section \ref{subsec:height} we define the height function and we introduce the Lehmer Conjecture, an open problem in Number Theory which has connections to Salem numbers.

Section~\ref{mainsection} contains the main results of this paper. We explain how Pisot number and units behave under the $\Log$ map, we present our three algorithms and we prove that they solve the problems P1,P2,P3 stated in the introduction.

We conclude the paper by showing, in Section~\ref{subsec:bound}, how to give a simple bound for the output of Algorithms \ref{Algo1}, \ref{Algo2} in terms of the input.

\section{Notation and preliminaries}\label{preliminaries}
In this section we recall the basic definitions and notations in algebraic number theory that we will need for our theorem. We also give the definition of Salem and Pisot numbers and a natural generalization to the complex plane, and recall some of their known properties. 
   \subsection{Number fields}\label{subsec:nf}
      \subsubsection*{Notation}
	 We will indicate by $K$ a number field and by $\Oo_K$ its ring of integers; as customary, $K^*$ and $\Oo_K^*$ will be the multiplicative groups of invertible elements in $K$ and $\Oo_K$ respectively.
	 For all complex numbers $z$, a horizontal bar will denote the usual complex conjugation.
	 We denote by $\sigma_1,\dotsc,\sigma_{r_1}$ the embeddings of $K$ in $\R$, and by $\tau_1,\overline{\tau_1},\dotsc,\tau_{r_2},\overline{\tau_{r_2}}$ the remaining embeddings of $K$ in $\C$. (It is clear that the non-real embeddings appear in complex conjugate pairs.) Let $n$ be the dimension of $K$ over $\Q$, so that $n=r_1+2r_2$.
	 When there is no need to distinguish real and complex embeddings, we write $(\phi_1,\dotsc,\phi_n)=(\sigma_1,\dotsc,\sigma_{r_1},\tau_1,\dotsc,\tau_{r_2},\overline{\tau_{r_1}},\dotsc,\overline{\tau_{r_2}})$.
	 Let also $\mu_K$ indicate the cyclic group of all the roots of unity belonging to $K$.
   
%	 \begin{de}
%	    A number field $K$ is called a CM field if $K$ is totally imaginary, $F=K\cap\R$ is totally real, and $[K:F]=2$, i.e. $K$ is a totally complex quadratic extension of a totally real field.
%	 \end{de}
%
%	 CM fields are a remarkable class or number fields with several interesting algebraic properties.
%   
%	 ADD HERE SOME RESULT ABOUT PISOT NUMBERS AND/OR NUMERATION SYSTEMS WHICH FAILS FOR CM FIELDS, OR IN WHICH CM FIELDS ARISE NATURALLY.

%    Let $f$ be the map
%    \begin{align*}
%       f:K &\to \R^n\\	
%       x &\mapsto (\sigma_1(x),\dotsc,\sigma_r(x),\Re \tau_1(x),\Im\tau_1(x),\dotsc,\Re\tau_s(x),\Im\tau_s(x)).
%    \end{align*}
%    The function $f$ embeds $K$ into $\R^n$ as $\Q$-vector spaces, and maps the ring of integers $\Oo_K$ into a lattice $\Lambda_K$.
%    
%    Let $\Log(\cdot)$ be the map
%    \begin{align*}
%        \Log:\R^n &\to \R^{r+s}\\	
%        (x_1,\dotsc,x_n) &\mapsto \left(\log\abs{x_1},\dotsc,\log\abs{x_r},\log(x_{r+1}^2+x_{r+2}^2),\dotsc,\log(x_{n-1}^2+x_n^2)\right).
%    \end{align*}  
      \subsubsection*{The structure of the units}\label{structure}
	The structure of the group $\Oo_K^*$ is described by the classical theorem of Dirichlet, which states that $\Oo_K^*\cong \mu_K\times\Z^{r_1+r_2-1}$. In order to prove this result one introduces the following mapping:
	 \begin{align*}
	    \Log:\Oo_K\setminus\{0\} &\to \R^{r_1+r_2}\\	
	    x &\mapsto \left(\log\abs{\sigma_1(x)},\dotsc,\log\abs{\sigma_{r_1}(x)},2\log\abs{\tau_1(x)},\dotsc,2\log\abs{\tau_{r_2}(x)}\right).
	 \end{align*}
	 It is easy to see that this map is a group homomorphism from the multiplicative structure of $\Oo_K$ to the additive structure of $\R^{r_1+r_2}$.
	 Furthermore $\Log(\cdot)$ maps $\Oo_K^*$ to a lattice $\Lambda$ of rank $r_1+r_2-1$ contained in the hyperplane $\Hh\subset\R^{r_1+r_2}$ defined by the equation $x_1+\dotsb+x_{r_1+r_2}=0$.
	 
	 The kernel of $\Log$ is precisely the set $\mu_K$ of the roots of unity contained in $K$; hence one proves that $\Oo_K^*\cong \mu_K\times\Lambda\cong \mu_K\times\Z^{r_1+r_2-1}$. We will write $r=r_1+r_2-1$. (The details can be found in any book on classical algebraic number theory; for example \cite{marcus})
	 
	 A set of $r$ units in $\Oo_K^*$ whose images through $\Log$ form a basis of $\Lambda$ is called a fundamental system of units in $\Oo_K$.
	 %Hence $\Oo_K^*$ is a finitely generated (multiplicative) group with a fundamental system of units as its basis.
	 
%	 It will be briefly discussed in Subsection \ref{LabelUsefulAlg} how a fundamental system of units can be determined.
	We remark that a fundamental system of units can be computed algorithmically, see \cite{cohen}.
	 \subsubsection*{The regulator}	 
	 Fix a fundamental system of units $\{u_1,\dotsc,u_r\}$, and consider the $(r+1)\times r$ matrix
	 \begin{equation}\label{LabelMatrixA}A=\left(
	    \begin{array}{ccc}
	       \log\abs{\sigma_1(u_1)}	& \dots		& \log\abs{\sigma_1(u_r)}	\\
	       \vdots			& \ddots	& \vdots			\\
	       \log\abs{\sigma_{r_1}(u_1)} & \dots		& \log\abs{\sigma_{r_1}(u_r)}	\\
	       2\log\abs{\tau_1(u_1)} 	& \dots		& 2\log\abs{\tau_1(u_r)}	\\
	       \vdots			& \ddots	& \vdots			\\
	       2\log\abs{\tau_{r_2}(u_1)}	& \dots 	& 2\log\abs{\tau_{r_2}(u_r)}	
	    \end{array}\right)
	 \end{equation}   
   %       \[A=\left(
   % 	 \begin{array}{cccccc}
   % 	    \log\abs{\sigma_1(u_1)} & \dotsc & \log\abs{\sigma_{r_1}(u_1)} & 2\log\abs{\tau_1(u_1)} & \dotsc & 2\log\abs{\tau_{r_2}(u_1)}\\
   % 	    \vdots & \ddots & \vdots & \vdots & \ddots & \vdots \\
   % 	    \log\abs{\sigma_1(u_r)} & \dotsc & \log\abs{\sigma_{r_1}(u_r)} & 2\log\abs{\tau_1(u_r)} & \dotsc & 2\log\abs{\tau_{r_2}(u_r)}
   % 	 \end{array}\right)
   %       \]
	 whose columns are the images of the fundamental system through $\Log$.
	 
	 Let us define the regulator $\Reg (\Oo_K)$ as the absolute value of the determinant of the square matrix obtained by deleting a row from $A$. Because the sum of the entries in every column of $A$ is $0$, one can prove that this quantity is well-defined.
	 Furthermore, one can see that $\Reg (\Oo_K)= \frac{\Vol(\Lambda)}{\sqrt{r+1}}$ where $\Vol(\Lambda)$ is the ($r$-dimensional) volume of a fundamental domain of $\Lambda$, and therefore $\Reg (\Oo_K)$ does not depend on the choice of a particular fundamental system.
	 
	 The regulator of $\Oo_K$ is an important invariant of the field $K$ and measures the complexity of the units. It is closely related to the class number and appears in many important formulae which describe the distribution of the ideals of $\Oo_K$.
      
         \subsubsection*{CM fields}
          A remarkable class or number fields with several interesting algebraic properties are the so called CM fields.
	 \begin{de}
	    A number field $K$ is called a CM field if $K$ is totally imaginary, $F=K\cap\R$ is totally real, and $[K:F]=2$, i.e. $K$ is a totally complex quadratic extension of a totally real field.
	 \end{de}
CM fields can be characterised in terms of their units in the following way:
	 \begin{prop}[See \cite{remak} §2]\label{prop.rank.CMunits}
	    Let $K$ be a number field and $k$ a proper subfield. Let $r_K$ and $r_k$ be the ranks of the groups of units $\Oo_K^*$ and $\Oo_k^*$ respectively. Then $r_K=r_k$ if and only if $K$ is a CM field and $k=K\cap \R$.
	 \end{prop}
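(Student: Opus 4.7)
The plan is to prove this by direct computation of both ranks using Dirichlet's theorem, and then to analyze when the inequality $r_K\geq r_k$ collapses to an equality.

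Set $m=[K:k]\geq 2$. Write $r_1^k, r_2^k$ for the number of real embeddings and of complex-conjugate pairs of embeddings of $k$, and similarly $r_1^K, r_2^K$ for $K$, so that $r_k=r_1^k+r_2^k-1$ and $r_K=r_1^K+r_2^K-1$. The first step is to express the latter in terms of invariants of $k$ by counting how each embedding of $k$ lifts to $K$. Each real embedding $\sigma_i$ of $k$ (for $i=1,\dots,r_1^k$) extends to exactly $m$ embeddings of $K$, which split as $a_i$ real and $b_i$ conjugate pairs of complex ones, with $a_i+2b_i=m$. Each pair $\{\tau_j,\overline{\tau_j}\}$ of complex embeddings of $k$ extends to $m$ such pairs of complex embeddings of $K$ (no extension can be real, since the restriction would have to be real). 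Summing,
\begin{equation*}
    r_1^K=\sum_{i=1}^{r_1^k}a_i,\qquad r_2^K=\sum_{i=1}^{r_1^k}b_i+m\,r_2^k.
\end{equation*}

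The second step is to plug these into $r_K-r_k$ and rearrange:
\begin{equation*}
    r_K-r_k=\sum_{i=1}^{r_1^k}(a_i+b_i-1)+(m-1)\,r_2^k.
\end{equation*}
Since $m\geq 2$ and $a_i+2b_i=m$ with $a_i,b_i\geq 0$, one checks that $a_i+b_i=m-b_i\geq m/2\geq 1$, so every term on the right is non-negative. This proves $r_K\geq r_k$ in general, and shows that equality forces simultaneously $(m-1)r_2^k=0$ and $a_i+b_i=1$ for each $i$. The first condition (using $m\geq 2$) forces $r_2^k=0$, i.e.\ $k$ is totally real. The second, combined with $a_i+2b_i=m$, forces $a_i=0$, $b_i=1$ and $m=2$; so every real embedding of $k$ lifts to a pair of complex embeddings of $K$, meaning $K$ is totally complex of degree $2$ over $k$.

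The third step is to identify this situation with the CM condition. A totally complex quadratic extension of a totally real field is exactly a CM field by definition, so $K$ is CM. Moreover the maximal totally real subfield $F=K\cap\R$ of a CM field satisfies $[K:F]=2$, and our $k$ is a totally real subfield of $K$ with $[K:k]=2$ containing into $F$; therefore $k=F=K\cap\R$. Conversely, when $K$ is CM with $k=K\cap\R$, one has $r_1^k=[k:\Q]=\tfrac{n}{2}$, $r_2^k=0$, $r_1^K=0$, $r_2^K=\tfrac{n}{2}$, giving $r_K=r_k=\tfrac{n}{2}-1$.

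The main obstacle, and the only substantial point beyond bookkeeping, is to verify that the two equality conditions do fully characterize the CM setting together with $k=K\cap\R$; once the counting formula above is in place, this is a short check but it has to be carried out carefully so that no intermediate field slips between $k$ and the maximal totally real subfield.
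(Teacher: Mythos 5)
Your proof is correct and complete. The paper itself does not reproduce a proof of this proposition; it is cited from Remak, so there is nothing in the text to compare against step by step. Your route — expressing $r_1^K$ and $r_2^K$ by counting how real and complex embeddings of $k$ lift to $K$, then rewriting $r_K-r_k$ as a sum of explicitly nonnegative terms — is the standard and cleanest one, and you handle the genuinely delicate points: that a complex embedding of $k$ cannot lift to a real one, that $a_i+b_i\geq m/2\geq 1$, and that equality forces $m=2$, $r_2^k=0$, $a_i=0$, $b_i=1$ simultaneously, which is precisely the CM configuration. The last identification $k=K\cap\R$ is also justified correctly: since $k$ is totally real it lands in $\R$ under the restriction of the identity embedding of $K$, so $k\subseteq K\cap\R$, and then $[K:k]=2$ with $K\not\subseteq\R$ forces equality. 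Nothing is missing.
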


% 	 \begin{coro}\label{coro:CMgenUnit}
% 	    Let $K$ be a number field. If $K$ is not CM then $K=K(\alpha)$ for some $\alpha\in\Oo_K^*$.
% 	 \end{coro}
% 	 \begin{proof}
% 	    Let $K_1,\dotsc,K_m$ be the proper subfields of $K$.  By Proposition \ref{prop.rank.CMunits} the $\Oo_{K_i}^*$ are finitely many submodules of $\Oo_K^*$ of strictly smaller rank, therefore $\displaystyle\cup_{i=1}^m \Oo_{K_i}^*\subsetneq \Oo_K^*$, and there exist some units of $K$ not lying in any proper subfield.    
% 	 \end{proof}
	 This Proposition implies that any number field $K$ which is not CM can be generated over $\Q$ by an element of $\Oo_K^*$.
	 The converse is not true in general: it can happen that a CM field is generated by a unit, as for example the fields $\Q(i)$ or $\Q(i\varphi)$ where $\varphi$ is the golden ratio. This is however ``uncommon'' as it can be shown (see for example \cite{remak}) that every totally real field $F$ has infinitely many CM extensions $K$ of degree 2, but only for finitely many of them $\Oo_F^*\subsetneq\Oo_K^*$.
	 
	 The general picture is given by the following theorem, whose proof is postponed to Section \ref{mainsection}.
	 \begin{thm}\label{thm:ClassGenUnit}
	  Let $K$ be a field. The following are equivalent:
	  \begin{enumerate}
	   \item\label{thm:enum:1} There does not exist $\xi\in \Oo_K^*$ such that $K=\Q(\xi)$,
	   \item\label{thm:enum:2} $K$ is a CM field and $\Oo_K^*\subset\R$,
	   \item\label{thm:enum:3} $K$ is a CM field and $\Oo_K^*=\Oo_F^*$.
	  \end{enumerate}
	 \end{thm}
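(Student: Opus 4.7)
My plan is to handle the easy implications first and then to prove $(1)\Rightarrow(3)$ in two stages, each built on B.~H.~Neumann's covering lemma (an abelian group covered by finitely many cosets of subgroups necessarily has one of those subgroups of finite index) combined with Proposition~\ref{prop.rank.CMunits}. The equivalence $(2)\Leftrightarrow(3)$ follows from $\Oo_K\cap\R=\Oo_F$: a unit of $\Oo_K$ is real if and only if it lies in $\Oo_F^*$. The implication $(3)\Rightarrow(1)$ is immediate because no element of the proper subfield $F$ can generate $K$ over $\Q$.

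For the nontrivial direction $(1)\Rightarrow(3)$, I first show $K$ is CM with $\Oo_F^*$ of finite index in $\Oo_K^*$. Assumption (1) means every unit of $\Oo_K^*$ lies in some proper subfield of $K$; since $K$ has only finitely many subfields $M_1,\ldots,M_s$, this yields the finite union of subgroups $\Oo_K^*=\bigcup_i\Oo_{M_i}^*$. Neumann's lemma then forces some $\Oo_{M_i}^*$ to have finite index in $\Oo_K^*$, hence the same rank, and Proposition~\ref{prop.rank.CMunits} identifies $K$ as CM and $M_i=K\cap\R=F$.

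To upgrade finite index to equality, suppose toward a contradiction that $\xi\in\Oo_K^*\setminus\Oo_F^*$, so $\xi\notin\R$. By hypothesis, for every $u\in\Oo_F^*$ the unit $u\xi$ lies in some proper subfield of $K$, giving $\Oo_F^*=\bigcup_{M\subsetneq K} T_M$ with $T_M:=\{u\in\Oo_F^*:u\xi\in M\}$. The identity $u/u_0=(u\xi)/(u_0\xi)$ shows that any nonempty $T_M$ sits inside a single coset of $\Oo_{F\cap M}^*$, exhibiting $\Oo_F^*$ as a finite union of cosets of subgroups. Applying Neumann's lemma a second time yields an $M$ with $\Oo_{F\cap M}^*$ of finite index in $\Oo_F^*$; since the totally real field $F$ is not CM, Proposition~\ref{prop.rank.CMunits} forces $F\cap M=F$, and the constraints $M\subsetneq K$, $[K:F]=2$ collapse this to $M=F$. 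But $T_F$ is empty, for $u\xi\in F$ with $u\in\Oo_F^*\subseteq F$ would force $\xi\in F$, contradicting $\xi\notin\R$.

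The main obstacle is this second stage. Finite index is genuinely weaker than equality, and the naive approach of merely looking at the proper subfields of $K$ containing $\xi$ does not interact with the rank comparison. The trick is to pigeonhole the twisted units $u\xi$ with $u$ ranging over $\Oo_F^*$ and thus recast the question as a covering problem inside $\Oo_F^*$ itself, where the totally real (hence non-CM) nature of $F$ can be exploited through Proposition~\ref{prop.rank.CMunits}.
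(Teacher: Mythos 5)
Your proof is correct, and it takes a genuinely different route for the key implication. Both you and the paper handle the first stage of $(1)\Rightarrow(2)/(3)$ the same way: a finite covering argument combined with Proposition~\ref{prop.rank.CMunits} shows that if no unit generates $K$, then $K$ must be CM with $F = K\cap\R$ and $\Oo_F^*$ of full rank (you say ``finite index,'' the paper says the proper subfields' unit groups ``of strictly smaller rank'' cannot cover $\Oo_K^*$ --- same idea). After that the methods diverge. The paper closes the argument by citing Theorem~\ref{thm:algo3}: since {\tt FINDCPISOT} has already been proved to produce a generating complex Pisot unit whenever $\Oo_K^*\nsubseteq\R$, the contrapositive gives $\Oo_K^*\subseteq\R$ at once. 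The substance hidden in that citation is an explicit construction: given a complex fundamental unit $u_j$, the proof of Theorem~\ref{thm:algo3} produces $u_j\beta^m$ with image in $Q_k$ for $m$ large. Your second stage replaces this with a second, more delicate application of Neumann's covering lemma. Twisting by a hypothetical $\xi\in\Oo_K^*\setminus\Oo_F^*$ recasts the covering of $\Oo_K^*$ as a covering of $\Oo_F^*$ by cosets of the groups $\Oo_{F\cap M}^*$; since $F$ is totally real (hence not CM), Proposition~\ref{prop.rank.CMunits} forces the finite-index subgroup to correspond to $M=F$, and the emptiness of $T_F$ gives the contradiction. Your argument is purely algebraic and self-contained --- it does not depend on the algorithmic machinery developed in Section~\ref{mainsection} --- at the cost of being somewhat more intricate; the paper's version is shorter given that Theorem~\ref{thm:algo3} is already available. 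As a side effect, your proof gives an independent, non-algorithmic demonstration of the existence statement that underlies {\tt FINDCPISOT}.
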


   \subsection{Salem numbers}\label{subsec:salem}
Another class of algebraic numbers, closely related to Pisot numbers, is the class of Salem numbers.

      \begin{de}
	 A real algebraic integer  $\theta>1$ is called a Salem number if all its conjugates have modulus at most 1, and at least one conjugate lies on the unit circle.
      \end{de}     

Pisot numbers and Salem numbers are related by several topological properties. For example, any Pisot number is a limit of a sequence of Salem numbers; see for example \cite{PisotSalemNumbers}.

The requirement of one conjugate lying on the unit circle immediately leads to the following.
      \begin{rmk}\label{rem:salem}
         Let $\theta$ be a Salem number of degree $d$. Then $d-2$ of all conjugates of $\theta$ lie on the unit circle. Moreover, the minimal polynomial of $\theta$ is reciprocal, hence $\theta$ is a unit.
      \end{rmk}
     
The following result gives a simple description of the structure of fields generated by Salem numbers.
      \begin{prop}[Salem \cite{salem2} pages 163,169]\label{prop:salem}
         Let $K$ be a number field.

There exists a Salem number $\tau$ such that $K=\Q(\tau)$ if and only if $K$ has a totally real subfield $K'$ of index 2, and $K=K'(\tau)$ with $\tau+\tau^{-1}=\alpha$, where $\alpha>2$ is an algebraic integer in $K'$, all whose conjugates $\neq\alpha$ lie in $(-2,2)$. 
         
         If $K=\Q(\tau)$ for some Salem number $\tau$ of degree $n$, then there is a Salem number $\tau_0\in K$ such that the set of Salem numbers of degree $n$ in $K$ consists of the powers of $\tau_0$.
      \end{prop}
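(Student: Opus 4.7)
The plan is to tackle the two assertions separately. For the first equivalence the main tool is the reciprocity of the minimal polynomial of a Salem number (Remark \ref{rem:salem}), which makes the substitution $\alpha = \tau + \tau^{-1}$ a natural bridge between Salem numbers in $K$ and certain totally real integers in a subfield of index $2$. For the forward direction, suppose $K = \Q(\tau)$ with $\tau$ Salem of degree $n$. Reciprocity groups the $n-2$ unit-circle conjugates into complex conjugate pairs $\zeta, \bar\zeta = \zeta^{-1}$, so $\alpha = \tau + \tau^{-1}$ is an algebraic integer whose conjugates are $\alpha > 2$ together with the values $2\Re(\zeta) \in (-2,2)$ (strict, since $\zeta = \pm 1$ would force the irreducible minimal polynomial to have the rational root $\pm 1$). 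Thus $K' := \Q(\alpha)$ is totally real, and the relation $\tau^2 - \alpha\tau + 1 = 0$ gives $[K:K'] = 2$. For the converse, given $K'$ and $\alpha$ as described, set $\tau = (\alpha + \sqrt{\alpha^2-4})/2 > 1$; every conjugate $\alpha_j \ne \alpha$ lies in $(-2,2)$, so the roots of $x^2 - \alpha_j x + 1$ form a non-real pair on the unit circle, confirming that $\tau$ is Salem.

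For the second statement I would invoke the $\Log$ map. A Salem number of degree $n$ in $K$ is a unit (Remark \ref{rem:salem}) whose Log image lies in the one-dimensional subspace $V = \R \cdot (1,-1,0,\dots,0) \subset \Hh$, where the first two coordinates correspond to the real embeddings sending $\tau$ to $\tau$ and $\tau^{-1}$, and the complex slots vanish because all other conjugates have absolute value $1$. The sublattice $\Lambda \cap V$ is nontrivial by hypothesis, hence cyclic of rank $1$; let $\tau_0$ be the unit corresponding to its smallest positive generator, adjusted in sign so that its value under the identity embedding is positive (possible because $K$ has real embeddings, so $\mu_K = \{\pm 1\}$). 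Reading off coordinates, $\tau_0$ is itself a Salem number, and every Salem $\tau$ of degree $n$ in $K$ satisfies $\Log(\tau) = k\Log(\tau_0)$ with $k \in \Z_{>0}$, hence $\tau = \tau_0^k$ once the sign is fixed. The inclusions $K = \Q(\tau) \subseteq \Q(\tau_0) \subseteq K$ then force $\Q(\tau_0) = K$, so $\tau_0$ has degree $n$.

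The main subtlety is the sign and root-of-unity bookkeeping in the last step: one has to verify both that the lattice-theoretic generator lifts to a genuine Salem number (and not merely to one up to a torsion factor), and that the powers of $\tau_0$ exhaust the degree-$n$ Salem numbers of $K$ without introducing spurious $\pm 1$ factors. The observation $\mu_K = \{\pm 1\}$, combined with the positivity of any Salem number, resolves this cleanly.
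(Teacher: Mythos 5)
The paper does not prove Proposition~\ref{prop:salem}; it is cited directly from Salem's work, so there is no internal proof to compare against. Your self-contained argument is correct.

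For the first equivalence, the substitution $\alpha=\tau+\tau^{-1}$ together with the reciprocity of the Salem polynomial is exactly the right bridge. To tighten the forward direction you should note explicitly that each conjugate of $\alpha$ is hit by exactly two embeddings of $K$ (the pair sending $\tau$ to $\zeta$ and to $\zeta^{-1}$), so $[\Q(\alpha):\Q]=n/2$; combined with the quadratic relation $\tau^2-\alpha\tau+1=0$ and the fact that $\Q(\alpha)$ is totally real while $K$ is not, this forces $[K:\Q(\alpha)]=2$. In the converse direction one also wants $\Q(\tau)=K$, which follows from $K=K'(\tau)$ once one knows $K'=\Q(\alpha)$, so your implicit reading of the statement is the right one.

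For the second part, the $\Log$-map argument is efficient and correct: $\Lambda\cap V$ is a discrete subgroup of a one-dimensional space, hence cyclic; $\mu_K=\{\pm1\}$ since $K\subset\R$, so the generator $\tau_0$ is determined up to sign and the positivity of Salem numbers fixes it. You establish that every degree-$n$ Salem number of $K$ is a power of $\tau_0$; to complete the claimed set equality one should also state the reverse inclusion, namely that $\tau_0^k$ is itself a Salem number of degree $n$ for every $k\ge 1$, which follows at once from Proposition~\ref{prop:main}(\ref{GenRealSalem}) applied to $\Log(\tau_0^k)\in l_{1,2}$. With these small additions the proof is complete and clean, arguably more streamlined than Salem's original lattice-free argument since you can lean on the Dirichlet unit machinery already set up in the paper.
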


   \subsection{Height}\label{subsec:height}
   \subsubsection*{The logarithmic Weil height}
      The logarithmic Weil height is a function $h:\Qbar\to\R_{\geq 0}$ that measures the arithmetic complexity of an algebraic number. It is a powerful technical tool often used in number theory as a more advanced analogue of naive quantities such as the norm of the vector of the coefficients of the minimal polynomial of an algebraic number.
      
      We recall here that $h(x)=0$ if and only if $x$ is a root of unity or $x=0$, and that in any fixed number field $K$ and for any fixed $B\geq 0$ there are only finitely many elements of $K$ with $h(x)\leq B$.
      If $x=p/q\in\Q$ is a reduced fraction, then its height is $\log\max\{\abs{p},\abs{q}\}$.
%       It is a 
%       {\color{red}and it is defined as a sum of local contributions.
%       We denote by $\Mm_K$ the set of all non-equivalent absolute values on the field $K$, normalised in such a way that $\abs{x}_v=\abs{N_{K_v/\Q_p}(x)}^{1/n}$ if $v\mid p$, where $N_{K_v/\Q_p}(x)$ is the norm function. When $p=\infty$, $\Q_p=\R$ and $\abs{\cdot}_\infty$ is the usual absolute value.}
      
%       \begin{de}
%          If $x\in K$ let $h(x)=\sum_{v\in\Mm_K} \log^+\abs{x}_v$.
%       \end{de}
%       {\color{red}Due to the chosen normalisation,} it is possible to prove that this definition is independent of the field $K$ and extends to the whole $\Qbar$.
      We will omit the full definition of the Weil height, as it is not needed for our purposes. We only give here the expression of the height of an algebraic integer, which has a simpler form.
      
      Let us write $\log^+(x)$ for $\max \{0,\log x\}$.
     Then for $\beta\in \Oo_K$
     \[h(\beta)=\frac{1}{[K:\Q]}\left(\sum_{i=1}^{r_1}\log^+\abs{\sigma_i(\beta)}+\sum_{i=1}^{r_2}2\log^+\abs{\tau_i(\beta)}\right).\]
      
%       \begin{rmk}
% 	 If $\beta\in\Oo_K,$ the non-archimedean absolute values do not contribute to the height. The archimedean absolute values correspond to real embeddings and conjugate pairs of complex embeddings of the field $K$.
% 	       Let us write $\log^+(x)$ for $\max \{0,\log x\}$.
% 	 We can then write \[h(\beta)=\frac{1}{n}\left(\sum_{i=1}^{r_1}\log^+\abs{\sigma_i(\beta)}+\sum_{i=1}^{r_2}2\log^+\abs{\tau_i(\beta)}\right).\]
%       \end{rmk}

If $\theta$ is a Pisot or Salem number (resp. complex Pisot number), the expression of the height can be further simplified, as only one summand is different from 0. In this case the height is given by $h(\theta)=\frac{\log\theta}{n}$ (resp. $h(\theta)=\frac{2\log\abs{\theta}}{n}$).

\subsubsection*{The Lehmer Conjecture}
      It is a theorem of Kronecker that the height of an algebraic number is zero if and only if it is 0 or a root of unity. The problem of determining how small the height can be is a fundamental one in Diophantine approximation.
      
      It follows from the elementary properties of the height, that this can be made arbitrarily small as the degree goes to infinity, for example $h(\sqrt[m]{2})=\frac{\log 2}{m}$.    
      
      It is conjectured (Lehmer's Conjecture) that \[h(\alpha)>\frac{c}{[\Q(\alpha):\Q]}\] for some positive constant $c$ when $\alpha$ is not $0$ or a root of unity, but this statement is still unproved.
      
      The smallest non-zero known value of the quantity $h(\alpha)[\Q(\alpha):\Q]$ is obtained when $\alpha$ is a root of
      \[x^{10}+x^9-x^7-x^6-x^5-x^4-x^3+x+1,\]
      which is a Salem number.
      
      The Lehmer conjecture, and stronger statements, have been proved under additional hypotheses on $\alpha$.
      For example if $\alpha$ (not $0$ or a root of unity) belongs to an abelian extension of $\Q$, then $h(\alpha)\geq\frac{\log 5}{12}$ (see \cite{AmDv}); if $\alpha$ belongs to a totally real field or a CM field, then $h(\alpha)\geq \frac{1}{2}\log\frac{1+\sqrt{5}}{2}$ (see \cite{Schinzel}). Smyth proved in \cite{Smyth} that if $\alpha$ has an odd degree, then $h(\alpha)\geq \frac{\log(\tau)}{[\Q(\alpha):\Q]}$ where $\tau= 1.3247\dots$ is the smallest Pisot number.
      
      The best explicit result which holds for any algebraic number is the following improvement of a theorem of Dobrowolski:
      \begin{thm}[\cite{Voutier}]\label{TeoDobro}
         Let $\alpha$ be a non-zero algebraic number of degree $n\geq 2$ which is not a root on unity. Then
         \[h(\alpha)>\frac{1}{4n}\left(\frac{\log\log n}{\log n}\right)^3.\]
      \end{thm}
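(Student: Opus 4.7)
The plan is to convert the statement into a lower bound for the Mahler measure $M(\alpha)=\prod_{i=1}^n\max(1,|\alpha_i|)$ of the minimal polynomial of $\alpha$, using the identity $h(\alpha)=\frac{\log M(\alpha)}{n}$ (valid for algebraic integers, and extendable to the general case by accounting for the leading coefficient). The target is then $\log M(\alpha)>\frac{1}{4}\bigl(\frac{\log\log n}{\log n}\bigr)^3$. The method, originating with Dobrowolski, is to compare two estimates for a cleverly chosen auxiliary algebraic integer: an arithmetic lower bound coming from a divisibility by many primes, and an analytic upper bound coming from Mahler measure estimates.

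First I would fix a prime $p$ and consider the integer
\[R_p=\mathrm{Res}(f(x),f(x^p))=\prod_{i,j}(\alpha_i^p-\alpha_j),\]
where $f$ is the monic minimal polynomial of $\alpha$ with conjugates $\alpha_1,\dots,\alpha_n$. Either $R_p=0$, meaning that some $\alpha_i^p$ is a conjugate of $\alpha$; this forces strong Galois-theoretic constraints and can only occur for a tiny set of primes, a case that is handled directly. Or $R_p\neq 0$: then the Frobenius congruence $\alpha_i^p\equiv\alpha_i$ modulo every prime ideal above $p$ gives $p^n\mid R_p$, hence $|R_p|\geq p^n$. On the other hand, a standard estimate in terms of Mahler measure bounds $|R_p|$ above by roughly $2^{pn^2}M(\alpha)^{n(p+1)}$.

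To go beyond the weak bound $\log M(\alpha)\gg\frac{\log p}{p}$ obtained from a single prime, I would combine many primes $p_1<\dots<p_k$ in an interval $[N,2N]$ and form the product $\prod_m R_{p_m}$. Distinct primes give joint divisibility of size $\prod_m p_m^n$, while each factor $|\alpha_i^{p_m}-\alpha_j|$ is still controlled by $M(\alpha)^{p_m}$. By the prime number theorem the divisibility gives a lower bound of order $N^{kn}$, while the Mahler bound gives an upper bound of order $M(\alpha)^{cknN}$ for some explicit $c$; balancing the two forces
\[\log M(\alpha)\gg\frac{\log N}{N}.\]
Choosing $N$ of order $(\log n)^2/\log\log n$ then yields exactly the factor $\bigl(\log\log n/\log n\bigr)^3$ in the conclusion.

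The main obstacle is twofold. First, one has to quantify how rare the "degenerate" primes with $R_p=0$ are, and show that their contribution cannot sabotage the combined divisibility estimate. Second, and more delicately, one must track the constants all the way through to obtain the explicit factor $1/4$ in front: this needs sharp quantitative versions of the prime number theorem in short intervals, careful use of Stirling-type bounds in the Mahler measure estimate, and a precise optimisation of the parameters $N$ and $k$ as functions of $n$. This quantitative refinement over the earlier bounds of Dobrowolski and Cantor--Straus is the content of Voutier's contribution.
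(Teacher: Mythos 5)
This theorem is a cited external result: the paper quotes it from Voutier \cite{Voutier} without giving a proof, so there is no paper proof to compare against. Your sketch does give a fair high-level account of the Dobrowolski-type resultant argument that Voutier's theorem refines, and you correctly identify that the real content of \cite{Voutier} is the explicit constant $1/4$ valid for all $n\geq 2$, which your sketch defers rather than derives.

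One point that should be tightened if you wanted to turn this into an actual proof: the Frobenius step is not ``$\alpha_i^p\equiv\alpha_i$ modulo every prime above $p$.'' Rather, modulo $p$ one has $f(x^p)\equiv f(x)^p$, so $f(\alpha_i^p)=f(\alpha_i)^p+p\,g(\alpha_i)=p\,g(\alpha_i)$ for some $g\in\Z[x]$; taking the product over $i$ gives $R_p=p^n\,\mathrm{Nm}(g(\alpha))$ and hence $p^n\mid R_p$. Equivalently, $\alpha_i^p$ is congruent modulo $\mathfrak p$ to some conjugate $\alpha_{\sigma(i)}$ (the Frobenius permutation), not to $\alpha_i$ itself. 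Beyond that, the combination of several primes and the final optimisation $N\asymp(\log n)^2/\log\log n$ is the right shape, but as you acknowledge the careful bookkeeping needed to land on $1/(4n)\bigl(\log\log n/\log n\bigr)^3$ is precisely what is being outsourced to \cite{Voutier}; a self-contained proof would have to reproduce that analysis.
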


\section{Locating Pisot units}\label{mainsection}
  It has been already stated in Subsection \ref{structure} that through the $\Log$ map, the group of units $\Oo^*$ is mapped to a lattice of rank $r=r_1+r_2-1$. In this section we will describe how Pisot and complex Pisot numbers behave under the $\Log$ map. 
  
   Let us define
\[Q_i=\{(x_1,\dotsc,x_{r+1})\in \R^{r+1} \mid x_i>0\text{ and }x_j<0\;\forall j\neq i\},\quad i=1,\dotsc, r+1\]
   and let $\overline{Q}_i$ be its topological closure.
   
  \begin{rmk}\label{rem:altezz}
    If $\beta\in \Oo_K\setminus\{0\}$ and $\Log(\beta)\in \overline{Q}_k$, then only one absolute value contributes to the height. In this case the height of $\beta$ is precisely the $k$-th component of $\Log(\beta)$.
  \end{rmk}
   Let us also define 
\[l_{k,j}=\left\{(x_1,\dotsc,x_{r+1})\in \R^{r+1} \mid x_k>0,\; x_j=-x_k,\; x_i=0\; \forall i\neq j,k\right\}\]
   for all pairs of distinct indices $k,j=1,\dotsc, r+1$. The $l_{k,j}$ are half-lines that lie in the boundary of the intersection $\overline{Q}_k$ and the hyperplane $\Hh$.
 
      It is clear from the definition that for a Pisot or a complex Pisot number $\beta\in K$ of degree $n$ all the entries of $\Log(\beta)$ are negative except the one corresponding to the identity embedding and therefore $\Log(\beta)\in Q_k$ where $\phi_k$ is the identity embedding. Similarly the image of a Salem number of degree $n$ under the $\Log$ map lies in the set $l_{k,j}$ for some $j$, where $\phi_k$ is the identity embedding.

Notice that it is necessary to assume in advance that $\beta$ has degree $n$ because Pisot numbers in $K$ of degree smaller than $n$ will be fixed by some embedding other than the identity, and therefore $\Log(\beta)$ will have more than just one positive coordinate.
   
   We will now show that this property characterises to some extent the (complex) Pisot numbers in $K$ of degree $n$:
   \begin{prop}\label{prop:main}
   Let $\beta\in \Oo_K\setminus\{0\}$.
      \begin{enumerate}%[label=(\roman*)]
	 \item\label{GenReal} If $\Log(\beta)\in Q_k$ for some $k\leq r_1$, then $K=\Q(\beta)$;\\
	 if $\phi_k(\beta)>0,$ then $\phi_k(\beta)$ is a Pisot number.
	 \item\label{GenRealSalem} If $\Log(\beta)\in l_{k,j}$ for some $k\leq r_1$, then $K=\Q(\beta)$;\\
	 if $\phi_k(\beta)>0,$ then $\phi_k(\beta)$ is a Salem number.
	 \item\label{GenComplnCM} If $\Log(\beta)\in Q_k$ for some $k>r_1$ and $\phi_k(\beta)\not\in\R$, then  $K=\Q(\beta)$ and $\phi_k(\beta)$ is a complex Pisot number.
	 \item\label{GenComplCM} If $\Log(\beta)\in Q_k$ for some $k>r_1$ and $\phi_k(\beta)\in\R$, then $[K:\Q(\beta)]=2$;\\
	 if $\phi_k(\beta)>0$, then $\phi_k(\beta)$ is a Pisot number.
      \end{enumerate}
   \end{prop}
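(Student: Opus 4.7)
The plan is to reduce every part to one counting observation: setting $d:=[K:\Q(\beta)]$, each embedding of $\Q(\beta)$ into $\C$ extends to exactly $d$ embeddings of $K$, so in the multiset $\{\phi_1(\beta),\dotsc,\phi_n(\beta)\}$ every conjugate of $\beta$ appears with multiplicity exactly $d$. The sign pattern of $\Log(\beta)$ records how many embeddings send $\beta$ outside, onto, or inside the unit circle, and comparing these counts with the forced multiplicity $d$ will pin down both $d$ and the nature of $\phi_k(\beta)$.

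For parts (1), (3), (4) I would argue as follows. In (1), the hypothesis $\Log(\beta)\in Q_k$ with $k\leq r_1$ says that exactly one embedding ($\sigma_k$) yields $|\phi_i(\beta)|>1$; this unique modulus-$>1$ conjugate must have multiplicity $d$, so $d=1$, and the negativity of the remaining coordinates exhibits $\phi_k(\beta)$ (when $>0$) as a Pisot number. In (3), with $k>r_1$ and $\phi_k(\beta)\notin\R$, the two embeddings $\tau_{k-r_1}$ and $\overline{\tau_{k-r_1}}$ produce two \emph{distinct} conjugates of modulus $>1$, each appearing exactly once, so $d=1$ and $\phi_k(\beta)$ is a complex Pisot number. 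In (4), the condition $\phi_k(\beta)\in\R$ collapses those two embeddings onto a single conjugate of modulus $>1$ attained twice, which forces $d=2$; then $\phi_k(\beta)$, when positive, is a real number $>1$ all of whose other conjugates lie in the open unit disk, i.e.\ a Pisot number of degree $n/2$.

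For part (2), the same multiplicity argument gives $K=\Q(\beta)$, so it remains to show that $\phi_k(\beta)$ fits the Salem profile. The zero coordinates of $\Log(\beta)$ produce conjugates of modulus $1$ and the coordinate $j$ gives one (or a pair of) conjugates inside the unit disk. The easy sub-case is $j\leq r_1$: any \emph{other} real coordinate being zero would produce a conjugate $\pm1$, impossible for an irreducible minimal polynomial of degree $\geq2$, so necessarily $r_1=2$ and the Salem property follows at once from the definition.

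The main obstacle will be excluding the sub-case $j>r_1$ in part (2), where one would otherwise be led to a non-real complex-conjugate pair of modulus $<1$, which does not fit the Salem profile. To rule it out I plan to examine $\alpha:=\beta+\beta^{-1}\in\Oo_K$: every embedding pair $\tau_l,\overline{\tau_l}$ with $|\tau_l(\beta)|=1$ satisfies $\tau_l(\beta)^{-1}=\overline{\tau_l(\beta)}$, hence $\tau_l(\alpha)=\overline{\tau_l}(\alpha)\in\R$, producing a value of $\alpha$ attained by two different embeddings and forcing $[K:\Q(\alpha)]\geq2$; on the other hand $\sigma_k(\alpha)=\phi_k(\beta)+\phi_k(\beta)^{-1}>2$ is attained only once (the collapsing pairs give values in $[-2,2]$, and the remaining pair with $|\tau_{j-r_1}(\beta)|<1$ gives non-real values, since $|\tau_{j-r_1}(\beta)|\neq1$ implies $\tau_{j-r_1}(\beta)^{-1}\neq\overline{\tau_{j-r_1}(\beta)}$), forcing $[K:\Q(\alpha)]=1$, a contradiction that closes part (2).
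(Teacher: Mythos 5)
Your multiplicity-counting argument for parts (1), (3) and (4) is exactly the paper's: each conjugate $\beta'$ is attained by precisely $[K:\Q(\beta)]$ embeddings, and the sign pattern of $\Log(\beta)$ pins down how many embeddings send $\beta$ outside the unit circle, hence the value of $[K:\Q(\beta)]$. Correct and in line with the paper.

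For part (2), however, you manufacture a difficulty that is not there. Once $K=\Q(\beta)$ is established by the same counting, the Salem conclusion follows directly from the paper's \emph{definition} of a Salem number (which only constrains the moduli of the conjugates) and from the shape of $l_{k,j}$: the positive $k$-th coordinate gives $\phi_k(\beta)>1$, the negative $j$-th coordinate gives a conjugate (or complex-conjugate pair) of modulus $<1$, and the remaining zero coordinates give conjugates of modulus exactly $1$. Thus all conjugates have modulus at most $1$ and at least one lies on the unit circle, which is the definition verbatim — regardless of whether $j\leq r_1$ or $j>r_1$. The paper accordingly makes no such case split and simply stops after the multiplicity argument. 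Your detour via $\alpha=\beta+\beta^{-1}$ is nonetheless correct and in fact proves a slightly stronger statement: that the configuration $j>r_1$ can never occur, i.e.\ a Salem number has a unique conjugate inside the unit disk, namely $\theta^{-1}$. But this is precisely the content that the paper records separately as a remark on the reciprocity of the Salem minimal polynomial, and it is not needed to conclude the Proposition. Also note that in your ``easy sub-case'' $j\leq r_1$ you invoke the existence of a conjugate on the unit circle; this silently assumes $r+1\geq 3$ (as does the paper), so the degenerate case of a real quadratic field, where $l_{k,j}$ actually contains quadratic Pisot units rather than Salem numbers, is passed over by both you and the paper.
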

   \begin{proof}
      For every conjugate $\beta'$ of $\beta$ there are exactly $[K:\Q(\beta)]$ distinct embeddings $\phi:K\hookrightarrow\Qbar$ such that $\phi(\beta)=\beta'$.\\
      In cases \ref{GenReal} and \ref{GenRealSalem} we have that  $\phi_i(\beta)$ is the only conjugate of $\beta$ to lie outside of the unit circle because for all  $j\neq i$ we have that $\abs{\phi_j(\beta)}\leq 1$, therefore we must have that $[K:\Q(\beta)]=1$.
      
      In the remaining cases $\phi_i$ is a complex embedding and $\abs{\phi_i(\beta)}=\abs{\overline{\phi_i(\beta)}}>1$ while all other embeddings map $\beta$ inside the unit disk.\\
      In case \ref{GenComplnCM} we have that $\phi_i(\beta)\neq \overline{\phi_i(\beta)}$ because they are not real, and again $[K:\Q(\beta)]=1$.\\
      In case \ref{GenComplCM} we have that $\phi_i(\beta)= \overline{\phi_i(\beta)}$ and therefore $[K:\Q(\beta)]=2$.
   \end{proof}
   
   \begin{rmk}
    According to the Proposition, the elements of $\Lambda\cap l_{k,j}$ correspond to Salem numbers if $\phi_k$ is a real embedding. In fact Remark \ref{rem:salem} tells us that \[(\Lambda\setminus\{0\})\cap\partial\overline{Q}_k=\bigcup_{j\neq k}l_{k,j},\] and this equality holds (with the same proof) for all indices $k$, even those corresponding to complex embeddings.
   \end{rmk}

   What has been said until now applies to any algebraic integer $\beta$. We are especially interested in finding Pisot (or complex Pisot) generators which are units.
   A generic element of $\Oo^*$ can be written in a unique way as $u= \zeta u_1^{e_1}\dotsm u_r^{e_r}$ with $\zeta\in\mu_K$. If we write $e=(e_1,\dotsc,e_r)$ for the vector of exponents, then
      \begin{align}\label{equation:2}\Log(u)=Ae,\end{align}
where $A$ is the matrix from~\eqref{LabelMatrixA}.
   In light of this relation, the problem of deciding whether $\Log(u)\in Q_i$, or finding such a $u$ is reduced to a problem of linear algebra involving lattices and real matrices, assuming that a fundamental system has been provided.

   \subsection{The algorithm}
Now we can provide  algorithms to find a (complex) Pisot generator of a field $K$. Thanks to Proposition~\ref{prop:main}, we have to search for lattice points in a proper ``sector'' of $\R^{r+1}$, and we want them to minimise one of the coordinates.
\subsubsection*{Integer Linear Programming}
The problem of minimising a linear function on a domain defined by linear equalities and inequalities has been extensively studies since Dantzig's Simplex algorithm in 1947. When one impose the additional constraint that only solutions in integral numbers should be considered, the problem is known as Integer Linear Programming and there are several algorithms available. We mention for example the Cutting plane method developed by Gomory and Chvátal in the 50s, which, with various refinements, is used today in most applications.

It is outside the scope of this paper to describe fully the implementation of the integer programming algorithms, therefore we state precisely the version that we will use.
 \begin{algorithm}[H]
   \caption{{\tt INTPROG}}
   \label{AlgoIntProg}
  \begin{algorithmic}[1]
    \REQUIRE A matrix $A\in\R^{m\times n}$, two vectors $b\in\R^m,c\in\R^n$, a real $\epsilon\geq 0$.
    \ENSURE A list of vectors $[x_0,\dotsc,x_s]\in\Z^n$ such that:\\
    $A^i \cdotp x_j\leq b_i$ holds for all indices $j=0,\dotsc,s$ and $i=1,\dotsc,m$,\\
    $x_0$ minimizes the quantity $c\cdotp x$ under the constraints above,\\
    $c\cdotp x_0\leq c\cdotp x_1\leq\dotsb \leq c\cdotp x_s\leq (1+\epsilon)c\cdotp x_0$,\\
    $\{x_0,x_1,\dotsc,x_s\}$ is the set of all vectors $x\in\Z^n$ such that $c\cdotp x\leq (1+\epsilon)c\cdotp x_0$.
  \end{algorithmic}
 \end{algorithm}
 A description of a possible implementation is found in \cite{MLIP}.
 
 We remark that standard Integer Programming algorithms usually find only a single optimal solution. This is enough for our Algorithms \ref{Algo1} and \ref{Algo2}, but for Algorithm \ref{Algo3} we need this improved version which enumerates all quasi-optimal solutions. This should be taken into account when implementing the algorithms, as it could lead to a difference in performance.
 
\subsubsection*{Algorithms for real fields}
The first algorithm that we present uses Integer Programming to find the point in $\overline{Q}_i$ which corresponds to a generating unit of smallest height.
We use Theorem \ref{TeoDobro} to ensure that the algorithm finds a non-torsion unit.
We define 
\[
\delta(n)=\begin{cases}
   \frac{1}{4n}\left(\frac{\log\log n}{\log n}\right)^3 &\text{if $n\geq 4$ and even,}\\
   \log(1.32) &\text{otherwise.}
          \end{cases}
\]

 \begin{algorithm}[H]
   \caption{\tt FINDMIN}
   \label{Algo1}
  \begin{algorithmic}[1]
    \REQUIRE A fundamental system of units $(u_1,\dots,u_r)$ of $K$ and an index $k=1,\dotsc,r+1$.
    \ENSURE A unit $\alpha\in\Oo^*$ such that $\Log(\alpha)\in \overline{Q}_k\setminus\{0\}$ and $h(\alpha)$ is minimal among such units.
    \STATE Construct the matrix $A$ as in \eqref{LabelMatrixA}
    \STATE Construct the matrix $B$ obtained from $A$ by changing the sign of each element of the $k$-th row.
    \STATE Set $c\in\R^r$ equal to the $k$-th row of $A$
    \STATE Set $b$ equal to  the vector $(b_1,\dotsc,b_{r+1})$ with $b_i=-\delta(n)$ if $i=k$ and $b_i=0$ otherwise.
    \STATE Call {\tt INTPROG} with input $(B,b,c,0)$. Let $e\in\Z^r$ be the first element of the output.
    \RETURN $\alpha=\prod_{i=1}^r u_i^{e_i}$.
  \end{algorithmic}
 \end{algorithm}

\begin{proof}[Proof of Theorem \ref{thm:algo1}]
    The set $Q_k\cap\Lambda$ is non-empty because $\Q\otimes\Lambda$ is dense in $\Hh$ and $\Hh\cap Q_i$ is an open subset of $\Hh$ which is invariant under multiplication by a positive scalar.
   Therefore integer programming will always terminate and will find a point.
   If $K$ is a real field, and $\phi_k$ is the identity embedding, then by Proposition \ref{prop:main} $\abs{\alpha}$ has the required properties.
   
   Notice that the constraint given by $b_i=-\delta(n)$ excludes the origin but no other point of the lattice.
\end{proof}

Notice that when $K$ is real and $\phi_k$ the identity the output of {\tt FINDMIN} does not necessarily yield a Pisot generator of $K$ since it might happen that $\Log(\alpha)\in l_{k,j}.$ In this case $K$ is generated by the Salem number $\abs{\alpha}$, and any Salem number in $K$ can be written as $\alpha^n$, which is a direct consequence of Proposition~\ref{prop:salem} and the minimality of $\alpha$. 

 When this happens, we can exclude all the Salem numbers whose $\Log$ lies in $\overline{Q}_k$  by changing the constraints in {\tt INTPROG}. The following theorem shows how.
\begin{thm}\label{thm:cutedge}
   Let $K$ be a number field, $k\in\{1,\dotsc,r+1\}$, and let $\alpha\in \Oo_K^*$ be the output of Algorithm {\tt FINDMIN}. Assume that $\Log(\alpha)\in l_{k,j}$ for some index $j$.
   Then every unit $\beta$ such that \[\Log(\beta)\in\overline{Q}_k\cap \big\{(x_1,\dotsc,x_{r+1})\in \R^{r+1} \mid\sum_{i\neq k,j} x_i>-\delta(n)\big\}\] is a power of $\alpha$ times a root of unity.   
\end{thm}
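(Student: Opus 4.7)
The plan is to divide $\beta$ by the largest power of $\alpha$ that keeps the $k$-th $\Log$-coordinate non-negative, and show that the remainder lies in $\mu_K$. Concretely, I would set $m=\lfloor\Log(\beta)_k/\Log(\alpha)_k\rfloor$ and $\gamma=\beta\alpha^{-m}\in\Oo_K^*$, so that $\Log(\gamma)_k\in[0,\Log(\alpha)_k)$. Since $\Log(\alpha)\in l_{k,j}$ is supported only on the positions $k$ and $j$, for $i\notin\{k,j\}$ we have $\Log(\gamma)_i=\Log(\beta)_i$; the two hypotheses on $\beta$ put these entries into $(-\delta(n),0]$ (they are non-positive because $\Log(\beta)\in\overline{Q}_k$, and each one exceeds $-\delta(n)$ because, as a non-positive summand, it is bounded below by the full sum $\sum_{i\neq k,j}\Log(\beta)_i>-\delta(n)$). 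Since the coordinates of $\Log(\gamma)$ sum to $0$, this yields
\[
\Log(\gamma)_k+\Log(\gamma)_j=-\sum_{i\neq k,j}\Log(\gamma)_i<\delta(n).
\]

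I would then argue by contradiction that $\gamma$ must be a root of unity, splitting on the sign of $\Log(\gamma)_j$. If $\Log(\gamma)_j\leq 0$, then $\Log(\gamma)\in\overline{Q}_k$, and if $\gamma$ were not torsion the minimality of $\alpha$ as output of {\tt FINDMIN} (which picks the non-torsion unit of smallest height with $\Log$-image in $\overline{Q}_k$) would force $\Log(\gamma)_k\geq\Log(\alpha)_k$, contradicting the choice of $m$. If instead $\Log(\gamma)_j>0$, then $n\cdot h(\gamma)=\Log(\gamma)_k+\Log(\gamma)_j<\delta(n)$ by the display above, contradicting the Dobrowolski/Smyth-type lower bound $n\cdot h(\xi)\geq\delta(n)$ valid for every non-torsion $\xi\in\Oo_K^*$. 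In either case $\gamma\in\mu_K$, so $\beta=\gamma\alpha^m$ as claimed.

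The point requiring the most care is the uniform lower bound invoked in the second case: the proof of Theorem~\ref{thm:algo1} only needs it for units already in $\overline{Q}_k$, where $n\cdot h$ equals a single coordinate, whereas here $\Log(\gamma)$ may have two positive entries. To establish it in general I would write $d=[\Q(\gamma):\Q]\mid n$ and combine Theorem~\ref{TeoDobro} for even $d\geq 4$, Smyth's bound $\log\tau$ for odd $d\geq 3$, and the minimum $\log\varphi$ attained at $d=2$; multiplying through by $n/d\geq 1$ and noting that the shape of $\delta(n)$ is precisely tailored to this comparison, one obtains $n\cdot h(\gamma)\geq\delta(n)$ for every divisor $d$ of $n$, closing the argument.
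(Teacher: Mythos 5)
Your proof is correct and follows the same basic strategy as the paper's: divide $\beta$ by a power of $\alpha$ (which is supported only on coordinates $k,j$), observe that the remaining coordinates are untouched and small, and conclude the residue is torsion. The difference is in the choice of normalization. You normalize the $k$-coordinate so that $\Log(\gamma)_k\in[0,\Log(\alpha)_k)$, which forces a case split on the sign of $\Log(\gamma)_j$: minimality of $\alpha$ kills case $\Log(\gamma)_j\leq 0$, while the height lower bound kills case $\Log(\gamma)_j>0$. The paper instead normalizes the $j$-coordinate so that $b_j\in[-h(\alpha),0)$, and then considers $\beta/\alpha$; this choice guarantees that both the $j$-th and $k$-th entries of $\Log(\beta/\alpha)$ are nonnegative (the latter via minimality), so the height collapses to the single expression $-\sum_{i\neq j,k}b_i<\delta(n)$ and there is no case analysis. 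Both routes use minimality of the {\tt FINDMIN} output and the Dobrowolski-type bound; the paper's normalization just happens to combine them in one computation. One point in your favor: your final paragraph explicitly flags that $\gamma$ (or $\beta/\alpha$) may have degree a proper divisor $d$ of $n$, so one must check $\delta(n)$ is dominated by the appropriate lower bound at every such $d$; the paper applies Theorem~\ref{TeoDobro} without comment on this, so your caution is warranted, though your sketch of the verification (``the shape of $\delta(n)$ is precisely tailored'') would need to be carried out concretely to be fully convincing.
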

\begin{proof}
   Let $\beta$ be a unit such that $\Log(\beta)=(b_1,\dotsc,b_{r+1})\in\overline{Q}_k$ and $-\delta(n)< \sum_{i\neq k,j} b_i\leq 0$. Dividing $\beta$ by a suitable power of $\alpha$ we can also assume that $-h(\alpha)\leq b_j< 0$.
   
%    Let $Q_k'=\overline{Q}_k\cap \left\{(x_1,\dotsc,x_{r+1})\in \R^{r+1} \mid-\sum_{i\neq k} x_i<h(\alpha)\right\}$.
%    Because $\alpha$ is the output of Algorithm FINDMIN, $Q_k'$ does not contain any point of $\Lambda\setminus\{0\}$.
   
   We have that
   \[h(\beta/\alpha)=\sum_{i\neq j,k} \max\{0,b_i\}+\max\{0,b_j+h(\alpha)\}+\max\{0,-\sum_{i\neq k}b_i-h(\alpha)\}.\]
   But $b_i\leq 0$ for all $i\neq k$ because $\Log(\beta)\in \overline{Q}_k$, $b_j+h(\alpha)\geq 0$ by our assumption on $\beta$, and $-\sum_{i\neq k}b_i=b_k=h(\beta)\geq h(\alpha)$ because $\alpha$ is the output of Algorithm {\tt FINDMIN}.
   Therefore
   \[h(\beta/\alpha)=b_j+h(\alpha)-\sum_{i\neq k}b_i-h(\alpha)=-\sum_{i\neq j,k} b_i<\delta(n).\]
   Therefore by Theorem \ref{TeoDobro} the ratio $\beta/\alpha$ is a root of unity.
\end{proof}

We now give an algorithm implementing the previous Theorem. 
 
\begin{algorithm}[H]
  \caption{{\tt CUTEDGE}}
  \label{Algo2}
  \begin{algorithmic}[1]
    \REQUIRE A fundamental system of units $(u_1,\dots,u_r)$ of $K$ and an index $k=1,\dotsc,r+1$
    \ENSURE A unit $\beta\in\Oo_K^*$ such that $\Log(\beta)\in Q_k$ and $h(\beta)$ is minimal among such units
    \STATE Run Algorithm 1 with the same input. Let $\alpha$ be the output
    \IF{$\Log(\alpha)\in Q_k$}
      \RETURN $\alpha$
    \ENDIF    
    \STATE Let $j$ be the index such that $\Log(\alpha)\in l_{k,j}$
    \STATE Construct the matrix $A$ as in Algorithm 1
    \STATE Construct the matrix $B'\in\R^{(r+1)\times r}$ obtained from $A$ by replacing the $k$-th row with $-A^k-A^j$
    \STATE Set $c\in\R^r$ equal to the $k$-th row of $A$ as in Algorithm 1
    \STATE Set $b\in\R^{r+1}$ equal to  the vector $(b_1,\dotsc,b_{r+1})$ with $b_k=-\delta(n)$ and $b_i=0$ for $i\neq k$
    \STATE Call algorithm {\tt INTPROG} with input $(B',b,c,0)$. Let $e'\in\Z^r$ be the first element of the output
    \RETURN $\beta=\prod_{i=1}^r u_i^{e'_i}$
   \end{algorithmic}
 \end{algorithm}

 \begin{proof}[Proof of Theorem \ref{thm:algo2}]
 Arguing as in the proof of Theorem \ref{thm:algo1} we see that the feasible region delimited by the constraints of the Integer Programming always contains integral points and therefore {\tt INTPROG} will return one of them.
 
 If the $\alpha$ obtained in step 1 is a Pisot number, than the conclusion of the Theorem is clear. If it is a Salem number, then by Theorem \ref{thm:cutedge} the final output of the algorithm must be a Pisot number because the constraint added in step 7 excludes all the Salem numbers whose $\Log$ lies in $\overline{Q}_k$.
 Then $\abs{\alpha}$ has all the required properties again by Proposition \ref{prop:main}.
\end{proof}

\bigskip

Here is an example of a field containing both Salem and Pisot numbers.
Let $\alpha=100.960\dots$ be the bigger real root of $x^4-101x^3+5x^2-101x+1$; this is a Salem number. When running algorithm {\tt FINDMIN} on the field $K=\Q(\alpha)$ the output is $\alpha$ itself, which is the smallest U-number in $\Q(\alpha)$. When running algorithm {\tt CUTEDGE} the output is the biggest real root of 
\begin{equation*}
   x^4+a_3x^3+a_2x^2+a_1x+1,
\end{equation*}
where
\begin{align*}
   a_3&=-60048257490013814123246164511189751124091132508231119928295605154893060\\
   a_2&=2556025223049864739934292009524109324899782644859727711036043393301222\\
   a_1&=97520402335817024268676911493103325.
\end{align*}

% \begin{equation*}
%    \begin{split}
%       x^4 - 60048257490013814123246164511189751124091132508231119928295605154893060x^3 +\\ +2556025223049864739934292009524109324899782644859727711036043393301222x^2 +\\
%       +97520402335817024268676911493103325x + 1,
%    \end{split}
% \end{equation*}
This is approximately $6\cdot 10^{70}.$ This example shows that there can be relatively a big gap between the smallest Salem and Pisot generators of the same field.

\bigskip

  This algorithm solves our motivating problem when the field is real, but several problems might occur when the field $K$ is complex. Firstly, after finding an element in $\Lambda\cap Q_k$, we cannot conclude that it corresponds to a generating unit, because we could be in case \ref{GenComplCM} of Proposition \ref{prop:main}. Furthermore, the solution to the Integer Programming problem might not be unique because there could be non-torsion units of modulus 1 (in fact, this always happens unless $K\cap \overline{K}$ is real or CM, see \cite{daileda}). It is to overcome this problems that we use the algorithm {\tt INTPROG} as stated in Algorithm \ref{AlgoIntProg}. We will in fact enumerate all points of $\Lambda$ in a thin slice of $Q_k$ and look for the smallest that meets our requirements.

 \subsubsection*{Algorithm for complex fields}

 \begin{algorithm}[H]
  \caption{FINDCPISOT}
  \label{Algo3}
  \begin{algorithmic}[1]
    \REQUIRE A fundamental system of units $(u_1,\dots,u_r)$ of a complex number field $K$, a generator $\zeta$ of $\mu_K$  and the index $k$ of the identity embedding.
    \ENSURE A unit $\beta\in\Oo_K^*$ such that $K=\Q(\beta)$, $\Log(\beta)\in Q_k$ and $h(\beta)$ is minimal among such units, if any exists. An error message otherwise.
    \IF{$\zeta\neq-1$}
      \STATE Run Algorithm {\tt CUTEDGE} with input $((u_1,\dotsc,u_r),k)$. Let $\beta$ be its output.
      \IF{$\beta\not\in\R$}
	\RETURN $\beta$
      \ELSE
	\RETURN $\zeta\beta$
      \ENDIF
    \ENDIF
    \IF{$u_i\in\R$ for all $i$}
      \RETURN \FALSE
    \ELSE
      \STATE Construct the matrix $A$ as in \eqref{LabelMatrixA}
      \STATE Construct the matrix $B$ obtained from $A$ by changing the sign of each element of the $k$-th row.
      \STATE Set $c\in\R^r$ equal to the $k$-th row of $A$
      \STATE Set $b$ equal to  the vector $(b_1,\dotsc,b_{r+1})$ with $b_i=-\delta(n)$ if $i=k$ and $b_i=0$ otherwise.
      \WHILE{\TRUE}\label{algo3:whileloop}
	\STATE Call {\tt INTPROG} with input $(B,b,c,10^{-3})$. Let $E=[e^{(0)},\dotsc,e^{(s)}]$ be the output.
	\FOR{$j=0$ to $s$}
	  \STATE Let $\beta=\prod_{i=1}^r u_i^{e^{(j)}_i}$
	  \IF{$\beta\not\in\R$ \AND $\Log(\beta)\in Q_k$}
	    \RETURN $\beta$
	  \ENDIF
	\ENDFOR
	\STATE $b_k\longleftarrow -(1+10^{-3})c\cdotp e^{(0)}$
      \ENDWHILE
    \ENDIF
   \end{algorithmic}
 \end{algorithm}

 \begin{proof}[Proof of Theorem \ref{thm:algo3}]
   If there are non-trivial roots of unity, then the Algorithm terminates because of Theorem \ref{thm:cutedge} and the output has the desired properties because of Proposition \ref{prop:main}.
   
   If $\mu_K=\{\pm 1\}$ and all the $u_i$ are real, then the Algorithm stops with an error message. In this case clearly no generating unit can exist.
   
   Let $u_j$ be a complex fundamental unit. Arguing as in the proof of Theorem \ref{thm:algo1} we can see that {\tt INTPROG} will always find a non-torsion unit $\beta$ such that $\Log(\beta)\in Q_k$. If $\beta\not\in\R$ then this will be the output of FINDCPISOT; if $\beta\in\R$ then $\Log(u_j \beta^m)\in Q_k$ for any sufficiently big $m$, and we can be sure that the while-loop in step \ref{algo3:whileloop} will be escaped. The rest of the thesis follows again by Proposition \ref{prop:main}.
  \end{proof}
  We will now give the proof of Theorem \ref{thm:ClassGenUnit} in Section \ref{preliminaries}.
   \begin{proof}[Proof of Theorem \ref{thm:ClassGenUnit}]
      Statements \ref{thm:enum:2}. and \ref{thm:enum:3}. are clearly equivalent, and they immediately imply \ref{thm:enum:1}.
      Let us now prove first that statement \ref{thm:enum:1} implies that $K$ is a CM field.
      
      Assume that $K$ is not CM and let $K_1,\dotsc,K_m$ be the proper subfields of $K$.  By Proposition \ref{prop.rank.CMunits} the $\Oo_{K_i}^*$ are finitely many submodules of $\Oo_K^*$ of strictly smaller rank, therefore $\displaystyle\cup_{i=1}^m \Oo_{K_i}^*\subsetneq \Oo_K^*$, and there exist some units of $K$ not lying in any proper subfield, which contradicts \ref{thm:enum:1}.
      
      To finish the proof we observe that if $\Oo_K^*\nsubseteq\R$ then by Theorem \ref{thm:algo3} $K$ is generated by a unit.
  \end{proof}
  
  \subsection{Height bound}\label{subsec:bound}
   We conclude the paper with the proof of a bound for the height of the smallest Pisot unit of degree $n$ in a real field $K$.%This allows us to bound the running time of Algorithms \ref{Algo1} and \ref{Algo2} in terms of the input.

  \begin{thm}
    Let $1\leq i \leq r$. There exist a non-zero vector $e\in\Z^r$ such that
    \begin{align*}
     &A^j\cdotp  e < 0 &\forall j\neq i,\\
     &A^i\cdotp e\leq\frac{\sqrt{r}}{2}\sum_{j=1}^{r+1}\norm{A^j}.
    \end{align*}
    In other words, for any $i$ there exists an element $\beta\in \Oo_K^*$ such that $\Log(\beta)\in Q_i$ and $h(\beta)\leq \frac{\sqrt{r}}{2}\sum_{j=1}^{r+1}\norm{A^j}$. If $K$ is a real field and $\phi_i$ is the identity embedding, then $\abs{\beta}$ is a Pisot unit of degree $n$.
  \end{thm}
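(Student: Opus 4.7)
The plan is to construct $e$ explicitly by rounding a carefully chosen target vector to the integer lattice, with Cauchy--Schwarz controlling the round-off error. Since the columns of $A$ are linearly independent and its rows sum to zero, $A$ restricts to a linear isomorphism $\R^r \to \Hh$.

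I would first fix a target $t \in \Hh$ by setting $t_j = -\frac{\sqrt{r}}{2}\norm{A^j}$ for every $j \neq i$; the coordinate $t_i = \frac{\sqrt{r}}{2}\sum_{j \neq i}\norm{A^j}$ is then forced by the hyperplane condition. Let $x \in \R^r$ be the unique preimage $A^{-1}(t)$, and pick $e \in \Z^r$ componentwise nearest to $x$, so $\norm{e - x}_\infty \le \frac{1}{2}$ and hence $\norm{e - x}_2 \le \frac{\sqrt{r}}{2}$. For each $j$ I would then write
\[
(Ae)_j = t_j + A^j \cdotp (e - x), \qquad \abs{A^j \cdotp (e - x)} \le \norm{A^j}\cdot \frac{\sqrt{r}}{2}
\]
by Cauchy--Schwarz. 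For $j \neq i$ this yields $(Ae)_j \le 0$, and for $j = i$ it yields $(Ae)_i \le t_i + \frac{\sqrt{r}}{2}\norm{A^i} = \frac{\sqrt{r}}{2}\sum_{k=1}^{r+1}\norm{A^k}$, which is precisely the claimed bound.

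The main obstacle is upgrading the non-strict inequality $(Ae)_j \le 0$ to the strict inequality required by the theorem. I would handle this with a small perturbation: fix a vector $w \in \Hh$ with $w_j < 0$ for every $j \neq i$ and chosen so that $A^{-1}(w)$ has no vanishing coordinate (such $w$ exists inside the $r$-dimensional open cone of admissible directions, whose complement of bad $w$'s is a finite union of hyperplanes), and replace $t$ by $t + \eta w$ for a small $\eta > 0$. Outside a countable set of exceptional $\eta$'s, the perturbed preimage has no half-integer coordinate, so the nearest-integer vector $e^{(\eta)}$ satisfies $\norm{e^{(\eta)} - A^{-1}(t+\eta w)}_\infty < \frac{1}{2}$ strictly; the Cauchy--Schwarz estimate then becomes strict and gives $(Ae^{(\eta)})_j < \eta w_j < 0$ for all $j \neq i$, while $(Ae^{(\eta)})_i \le \frac{\sqrt{r}}{2}\sum_k\norm{A^k} + \eta w_i$. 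Since the integer vectors $e^{(\eta)}$ stay in a bounded subset of $\Z^r$ as $\eta \to 0^+$, some common $e^* \in \Z^r$ is attained along a subsequence, and this $e^*$ satisfies the strict inequalities for $j \neq i$ together with the limiting bound on the $i$-th coordinate; it is nonzero because $Ae^*$ has at least one strictly negative coordinate.

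For the final conclusion, set $\beta = \prod_{k=1}^r u_k^{e^*_k}$; Remark~\ref{rem:altezz} identifies $h(\beta)$ with $(Ae^*)_i$, which yields the stated height bound. When $K$ is real and $\phi_i$ is the identity embedding, part~\ref{GenReal} of Proposition~\ref{prop:main} implies $K = \Q(\beta)$ and that $\abs{\beta}$ is a Pisot number; being a generator of $K$, it is necessarily of degree $n$.
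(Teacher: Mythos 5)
Your proof is correct and follows essentially the same strategy as the paper: pick a target in the open cone, round its preimage under $A$ to the nearest lattice point of $\Z^r$, and use the covering radius $\sqrt{r}/2$ together with Cauchy--Schwarz to control the error. The only difference is in how you upgrade $\leq 0$ to $<0$. The paper simply moves the target $\epsilon$ deeper into the cone (setting $A^j\cdotp w = -(\tfrac{\sqrt r}{2}+\epsilon)\norm{A^j}$) so that the Cauchy--Schwarz loss of $\tfrac{\sqrt r}{2}\norm{A^j}$ leaves a strict margin $-\epsilon\norm{A^j}<0$, then lets $\epsilon\to 0$ using discreteness of $\Z^r$; you instead perturb in a generic direction to avoid half-integer coordinates and thereby make the rounding inequality strict. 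Both limiting arguments are valid and rely on the same boundedness and pigeonhole reasoning, but the paper's choice of perturbation is a bit leaner since it avoids the side discussion of exceptional $\eta$'s and the choice of an admissible direction $w$. One small point worth being explicit about: your final nonvanishing argument implicitly uses that $A$ has rank $r$ (so $Ae^*=0 \Rightarrow e^*=0$), which is indeed the case since the columns of $A$ span the rank-$r$ lattice $\Lambda$.
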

  \begin{proof}
    Let $\epsilon>0$ and let $w$ be the solution to the linear system given by
  \begin{align*}
    A^j \cdotp w = -\left(\frac{\sqrt{r}}{2}+\epsilon\right)\norm{A^j},\quad \forall j\neq i.
  \end{align*}
  A unique solution exists because the matrix of this linear system has nonzero determinant (in fact, the absolute value of the determinant is equal to $\Reg(\Oo_K)$).
  
   Inside the ball of radius $\sqrt{r}/2$ centred in $w$ there clearly exists a vector $e$ with integral coordinates. For every $j\neq i$ we have
  \begin{align*}
    A^j\cdotp e&=A^j\cdotp w +A^j\cdotp(e-w)\leq  -\left(\frac{\sqrt{r}}{2}+\epsilon\right)\norm{A^j}+\abs{A^j\cdotp (e-w)}\leq\\
    &\leq-\left(\frac{\sqrt{r}}{2}+\epsilon\right)\norm{A^j}+\norm{A^j}\norm{e-w}\leq\\
    &\leq-\left(\frac{\sqrt{r}}{2}+\epsilon\right)\norm{A^j}+\frac{\sqrt{r}}{2}\norm{A^j}= -\epsilon\norm{A^j}<0.
   \end{align*}
  Furthermore,
  \begin{multline*}
    A^i\cdotp e=A^i\cdotp w+A^i\cdotp (e-w)\leq -\sum_{j\neq i}A^j\cdotp w+\abs{A^i\cdotp (e-w)}\leq \\
    \leq\left(\frac{\sqrt{r}}{2}+\epsilon\right)\sum_{j\neq i}\norm{A^j}+\norm{A^i}\norm{e-w}\leq \left(\frac{\sqrt{r}}{2}+\epsilon\right)\sum_{j\neq i}\norm{A^j}+\frac{\sqrt{r}}{2}\norm{A^i}=\\
    =\frac{\sqrt{r}}{2}\sum_{j=1}^{r+1}\norm{A^j}+\epsilon\sum_{j\neq i}\norm{A^j}.
  \end{multline*}
  We now let $\epsilon$ go to zero, and the bounds in the statement follow because the lattice $\Z^r$ is discrete.
  
  To prove the final statements we see that $\beta=u_1^{e_1}\dotsb u_r^{e_r}$ has the desired properties because of \eqref{equation:2}, Remark \ref{rem:altezz} and Proposition \ref{prop:main}.
\end{proof}

% \section{An implementation of the algorithms}

% However, one must be cautious about the results as there are some bugs in the linear programming procedures. Namely, the GLPK solver ``often'' returns a nonfeasible solution. For the purpose of debugging, a verbose mode is available.
% We also tried to implement the algorithms in Wolfram Mathematica, but the function NumberFieldFundamentalUnits is not well implemented -- it is very slow and may return a wrong output.
%we stumbled upon a bug in the functions NumberFieldFundamentalUnits and NumberFieldRegulator.

 \section*{Acknowledgements}
We thank Professor Robert Tichy and the TU Graz for hosting and supporting us in the initial stage of this work.
This work has been supported by the Czech Science Foundation grant GA\v CR 13-03538S and by the Grant Agency of the Czech Technical University in Prague, grant No. SGS11/162/OHK4/3T/14.

\nocite{PisotSalemNumbers}
\bibliography{vave-biblio}{}
\bibliographystyle{alpha}

\medskip

\noindent Francesco Veneziano:\\
Mathematisches Institut,
Universit\"{a}t Basel,
Spiegelgasse 1,
CH-4051 Basel,
Switzerland.\\
email: francesco.veneziano@unibas.ch\\

\noindent Tomáš Vávra:\\
Departments of Mathematics,
Faculty of Nuclear Sciences and Physical Engineering, CTU in Prague,
Trojanova 13,
120 00 Prague 2,
Czech Republic.\\
email: t.vavra@seznam.cz

\end{document}